\theoremstyle{definition}
\newtheorem{lemma}{Lemma}
\newtheorem*{sublemma}{Sublemma}
\newtheorem{prop}{Proposition}
\newtheorem{thm}{Theorem}
\newtheorem{remark}{Remark}
\newcommand{\ep}{\epsilon}
\renewcommand{\L}{\mathcal{L}}
\newcommand{\C}{\mathcal{C}}
\renewcommand{\t}[1]{\tilde{#1}}
\newcommand{\al}{\alpha}
\newcommand{\ga}{\gamma}
\newcommand{\del}{\delta}
\newcommand{\sig}{\sigma}
\newcommand{\lam}{\lambda}
\newcommand{\rt}{r}
\newcommand{\lt}{l}
\newcommand{\mc}[1]{\mathcal{#1}}
\newcommand{\abs}[1]{\left\vert#1\right\vert}
\newcommand{\acim}{ACIM\xspace}
\newcommand{\acims}{ACIMs\xspace}
\newcommand{\leb}{\text{Leb}}
\newcommand{\bp}{b}
\newcommand{\var}[1]{\text{var}(#1)}
\newcommand{\varsub}[2]{\text{var}_{#1}(#2)}
\newcommand{\lip}[1]{\text{Lip}(#1)}
\newcommand{\comment}[1]{#1}
\newcommand{\cly}{C_{\text{LY}}}
\newcommand{\cdis}{C_{\text{dis}}}
\title{Approximating invariant densities of metastable systems}
\author{Cecilia Gonz\'alez Tokman\thanks{Department of Mathematics, University of Maryland, College Park, MD 20742.  Email:  cecilia@math.umd.edu.  This research is partially supported by CONACyT, M\'exico.}
\and Brian R. Hunt\thanks{Department of Mathematics and Institute for Physical Sciences and Technology, University of Maryland, College Park, MD 20742, USA. Email: bhunt@umd.edu.}
\and Paul Wright\thanks{Department of Mathematics, University of Maryland, College Park, MD 20742.
Email:  paulrite@math.umd.edu.  This research is partially supported
by an NSF Mathematical Sciences Postdoctoral Research Fellowship.}}
\begin{document}
\maketitle
\begin{abstract}
We consider a piecewise smooth expanding map of the interval possessing two invariant subsets of positive Lebesgue measure and exactly two ergodic absolutely continuous invariant probability measures (\acims). When this system is perturbed slightly to make the invariant sets merge, we describe how the unique \acim of the perturbed map can be approximated by a convex combination of the two initial ergodic \acims.
\end{abstract}


\section{Introduction}

Metastable systems are studied in relation with phenomena ranging from molecular \cite{MeerbachEtAl} to oceanic \cite{FroylandPadbergEtAl} dynamics.
Typical trajectories of these systems remain in one of its almost invariant (metastable or quasi-stationary) components for a relatively long period of time, but eventually switch to a different component and repeat this behavior.
Quantitative aspects of these phenomena have been studied through eigenvalue and eigenvector approximation techniques for Markov models \cite{MeerbachSchutteFischer, FroylandPadberg}.  Here, we are concerned with rigorous approximation results for eigenvectors--in particular those that correspond to stationary measures of the dynamics--in a more general (non-Markov) setting.

Broadly, our setting concerns the approximation of absolutely continuous invariant probability measures (\acims) for certain hyperbolic maps with metastable states. These systems arise from perturbing an initial system $T_0$ with two disjoint invariant sets $I_\lt$, $I_\rt $ of positive Lebesgue measure. The initial map has two mutually singular ergodic \acims, $\mu_\lt$ and $\mu_\rt$.  When $T_0$ is perturbed in such a way that $I_\lt$ and $I_\rt $ lose their invariance and the perturbed map $T_\ep$ has only one \acim $\mu_{\ep}$, we are interested in approximating $\mu_{\ep}$ using $\mu_\lt$ and $\mu_\rt$.  Specifically, the systems we consider are piecewise $C^2$ expanding maps of an interval; see Figure~\ref{figure:2xmod1}.

Our results can be understood in the context of dynamical systems with holes as follows.
As the invariance of the two initially invariant sets is destroyed by the perturbation, we think of the small set of points $I_\lt\cap T_\ep^{-1}I_\rt$ that switch from $I_\lt$ to $I_\rt$, and likewise the set $I_\rt\cap T_\ep^{-1}I_\lt$, as being holes in the initially invariant sets. From this point of view we expect to be able to approximate $\mu_\ep $, for small $\ep$,  by a convex combination $\al\mu_{\lt}+(1-\al)\mu_\rt$ of the two initially invariant measures, with the ratio $\al/(1-\al)$ depending on the relative sizes of the holes.

\comment{
\begin{figure}[htbp]
\begin{center}
\label{figure:2xmod1}
\resizebox{5cm}{!}{\input 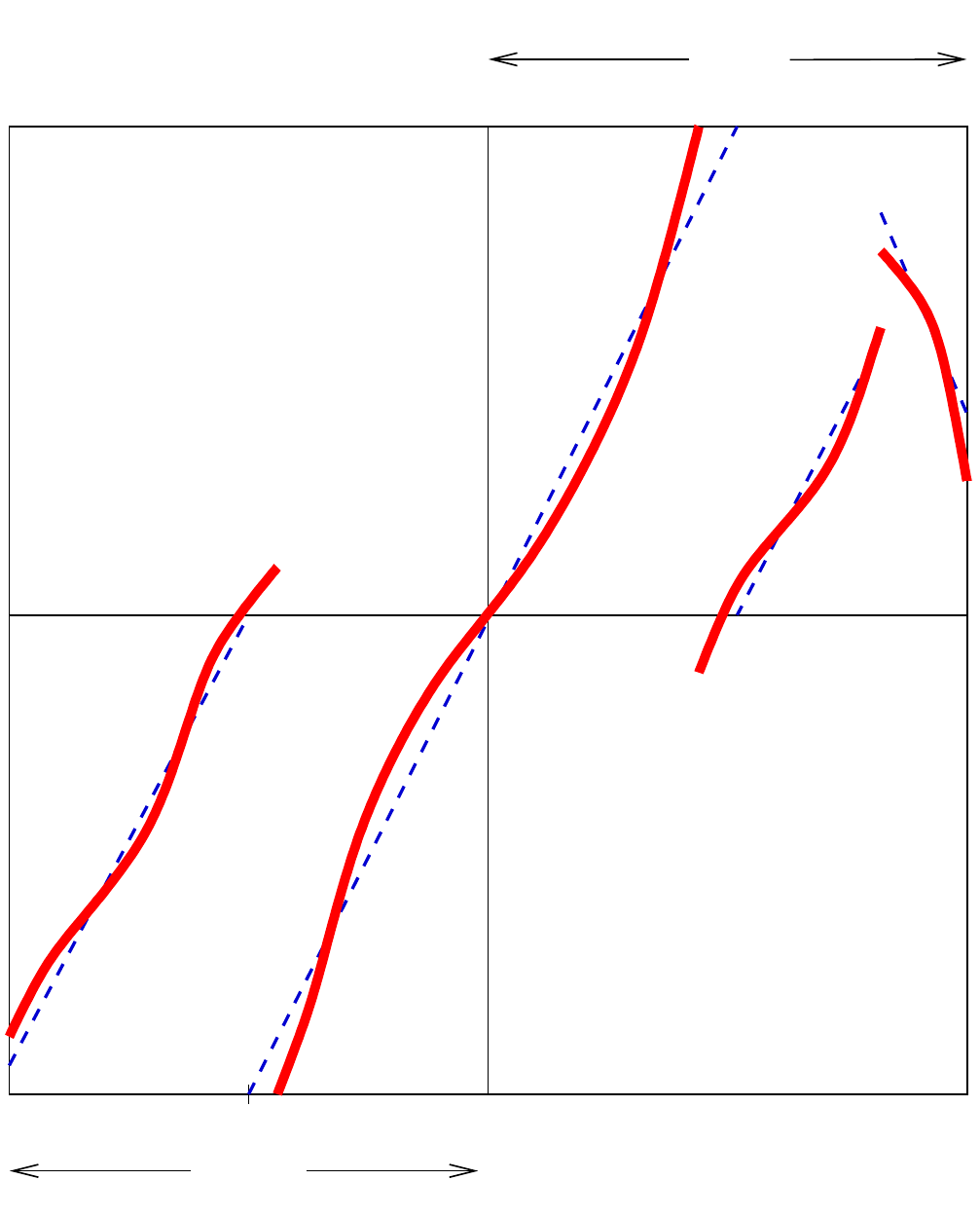_t}
\caption{Dashed: initial system. Thick: metastable system.}
\end{center}
\end{figure}
}

Before discussing our results, we present two illustrative examples.  We begin with a simple random system.
Consider the family of Markov chains in two states $\lt$ and $\rt$, with transition matrices
\[
Q_\ep=\begin{pmatrix}
1-\ep_{\lt \rightarrow \rt} &  \ep_{\lt \rightarrow \rt}\\
\ep_{\rt \rightarrow \lt} & 1-\ep_{\rt \rightarrow \lt}
\end{pmatrix},
\]
where $\ep=(\ep_{\lt \rightarrow \rt},\ep_{\rt \rightarrow \lt}) $.   We are interested in the behavior when $\ep\approx 0$. When $\ep=0$, the two sets $I_\lt=\{\lt\}$ and $I_\rt=\{\rt\}$ are invariant, giving rise to the two ergodic stationary probability measures $\mu_\lt=\delta_{\lt}$ and $\mu_\rt=\delta_\rt$.  When $\ep_{\lt \rightarrow \rt}> 0$, there is a unique stationary probability measure
\[
\mu_\ep=\al_\ep\mu_{\lt}+(1-\al_\ep)\mu_\rt, \text{ where } \frac{\al_\ep}{1-\al_\ep}=\frac{\ep_{\rt \rightarrow \lt}}{\ep_{\lt \rightarrow \rt}}.
\]
Observe that the ratio of the weights $\al_\ep/(1-\al_\ep)$, i.e.~$\mu_\ep(I_\lt)/\mu_\ep(I_\rt)$, is equal to the inverse ratio of the sizes of the holes, $\ep_{\rt \rightarrow \lt}/\ep_{\lt \rightarrow \rt}$.

Next, we consider two billiard tables $\mathcal{D}_\lt, \mathcal{D}_\rt$ in the plane, as indicated in Figure~\ref{figure:billiards}.  For $*\in\{\lt,\rt\} $, let $T_*:I_*\circlearrowleft $ be the corresponding billiard map, i.e.~the Poincar\'{e} map for the first return of the billiard flow to $\partial \mathcal{D}_* $.  We use $\abs{\partial \mathcal{D}_*} $ to denote the perimeter of $\mathcal{D}_*$.  A general reference for hyperbolic billiards is \cite{ChernovMarkarian}, where one can find the background for the assertions below.  We use the usual coordinates $(s,\varphi) $ on $I_* $, where $s$ is arc length on $\partial \mathcal{D}_* $, and $\varphi\in[-\pi/2,+\pi/2]$ is the angle between the outgoing velocity vector and the inward pointing normal vector to $\partial \mathcal{D}_* $.  Then it is well known that $T_*$ leaves (normalized) Liouville measure $\mu_* $ invariant, where $\mu_* $ has the density $\phi_* := d\mu_* /ds\, d\varphi = [2\abs{\partial \mathcal{D}_*}]^{-1} \cos \varphi $.  Next, for $\ep>0 $, let $h_\ep $ be a subsegment of  $\partial\mathcal{D}_\lt\cap \partial\mathcal{D}_\rt$ of length $\ep$, and let $\mathcal{D}_\ep$ be the billiard table resulting after $h_\ep $ is removed.    The corresponding density for the invariant Liouville measure of the billiard map is $\phi_\ep=[2(\abs{\partial \mathcal{D}_\lt}+\abs{\partial\mathcal{D}_\rt}-2\ep)]^{-1} \cos \varphi $.   Thus as $\ep\rightarrow 0$,
\[
    \phi_\ep\rightarrow \al\phi_{\lt}+(1-\al)\phi_\rt, \text{ where } \frac{\al}{1-\al}=\frac{\abs{\partial \mathcal{D}_\lt}}{\abs{\partial \mathcal{D}_\rt}},
\]
provided some care is taken to define all of the density functions involved on the same space.
Note that if we define the holes $H_{*,\ep}:=T_*^{-1} (h_\ep \times [-\pi/2,+\pi/2])$, then we can rewrite
$
    \al/(1-\al)=\mu_\rt (H_{\rt,\ep})/\mu_\lt (H_{\lt,\ep}),
$
so that again the ratio of the weights equals the inverse ratio of the sizes of the holes.
This example is most meaningful when $T_\lt$, $T_\rt$, and  $T_\ep$ are all ergodic, which is the case for the tables in Figure~\ref{figure:billiards}; see \S8.15 in \cite{ChernovMarkarian}.

\comment{
\begin{figure}[htbp]
\begin{center}
\resizebox{4cm}{!}{\input 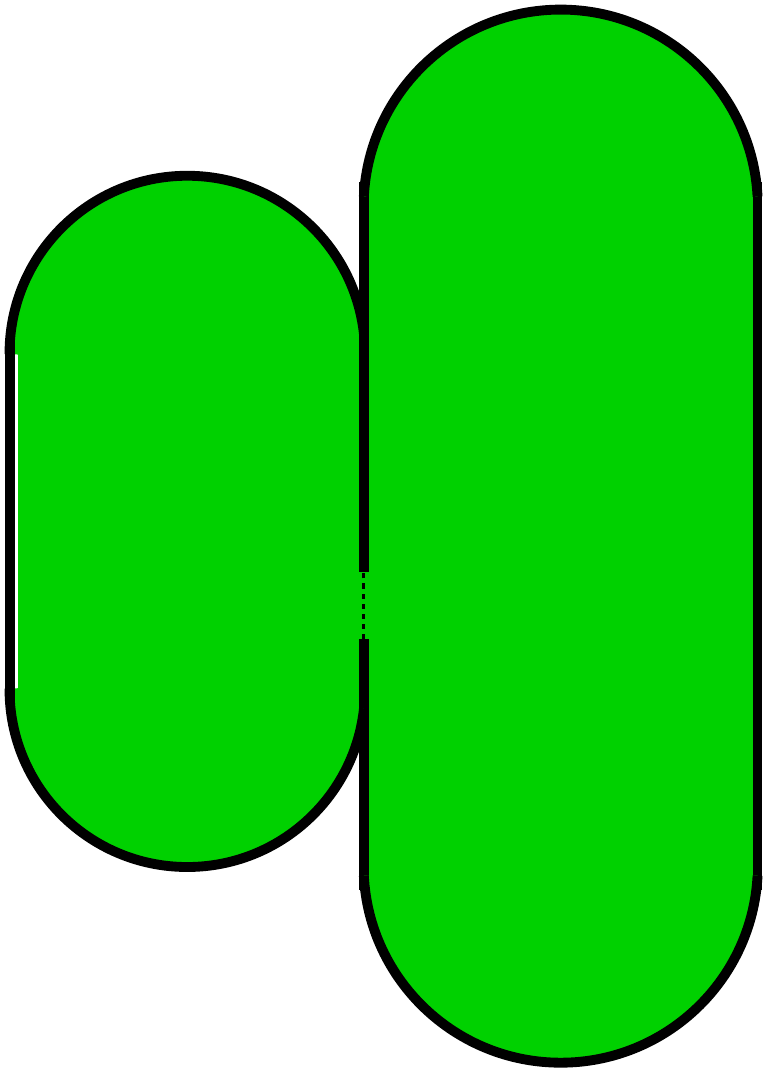_t}
\caption{Two ergodic billiard tables connected by a hole.}
\label{figure:billiards}
\end{center}
\end{figure}
}

In our main result, Theorem~\ref{thm1}, we show a corresponding result in the deterministic setting of piecewise $C^2$ expanding maps, under fairly general conditions described in \S\ref{S:model}.  We show that as $\ep\rightarrow 0$ the invariant density $\phi_\ep$ of $T_\ep$ converges in $L^1$ to a convex combination of the ergodic invariant densities of $T_0$, with the ratio of the weights given by the limiting inverse ratio of the sizes of the holes.  We emphasize that our results do not require any of the piecewise expanding maps involved to have a Markov partition.

The density $\phi_\ep$ corresponds to an eigenvector with eigenvalue 1 for the Perron-Frobenius operator acting on a suitable space of functions.  Our assumptions imply that for $\ep>0$, the  operator has 1 as a simple eigenvalue, and also another real simple eigenvalue slightly less than 1. In Theorem~\ref{thm2}, we characterize the eigenvectors of this lesser eigenvalue by showing that asymptotically they lie on the line spanned by $d\mu_\lt/dx-d\mu_\rt/dx $.

Unlike the two examples above, in the setting of piecewise $C^2$ expanding maps we have no explicit formulas for the invariant densities; even their existence is nontrivial.  Our methods rely on the fact that the densities of the \acims for $T_{\ep}$ are of bounded variation \cite{LasotaYorke}. Hence, they can be decomposed into regular and singular (or saltus) parts, as in \cite{Baladi}.
The key technical portions of our proofs include estimating and exploiting the locations and sizes of the jumps at the discontinuities of the invariant densities, which occur on the forward trajectories of the critical points of $T_\ep $.

This work is related to other recent work involving metastable systems and piecewise expanding maps.  Recently, \cite{KellerLiverani} studied metastable systems arising from piecewise smooth uniformly expanding maps with two invariant intervals. They perturbed such an initial map by a family of Markov operators close to the identity to produce a family of metastable systems for $\ep>0 $.  The associated Perron-Frobenius  operators acting on a suitable space of functions have $1$ as a simple eigenvalue and another simple eigenvalue $\rho_\ep<1 $.  As $\ep\rightarrow 0$, $\rho_\ep\rightarrow 1 $, and the authors rigorously computed the derivative $\lim_{\ep\rightarrow 0^+}(1-\rho_\ep)/\ep$. This provides information on the stationary exchange rate between the metastable states. Their work may be used to show a corresponding result in our setting.

Our work is also related to current and ongoing investigations on linear response.
These problems have the feature that, as Ruelle \cite{Ruelle} puts it, it is possible to formulate conjectures based on intuition or formal calculations, but the proofs often involve overcoming intricate technicalities.  In our setting, we know that $\mu_\ep(I_\lt)\rightarrow\al$ as $\ep\rightarrow 0$.  A pertinent open problem would be to try and characterize the higher-order terms $\mathcal{R}(\ep):=\mu_\ep(I_\lt)-\al $.  We do not expect $\mathcal{R}(\ep)$ to be differentiable at $\ep=0$ in general.  As shown in~\cite{BaladiSmania}, linear response fails precisely when the perturbations $T_\ep $ are transverse to the topological class of $T_0 $, at least for certain piecewise expanding unimodal maps $T_0 $ that are topologically mixing.  Results of~\cite{Keller82} show that in that setting the unique \acim $\phi_\ep $ of $T_\ep$ satisfies $\abs{\phi_\ep-\phi_0}_{L^1}=O(\ep\log\ep) $, where $\phi_0 $ is the unique \acim of $T_0$.  \cite{Baladi} gives examples where this estimate is optimal.

Another problem for further research is extending our results to higher dimensional piecewise hyperbolic maps. While we use techniques specific to one-dimensional maps, we are optimistic that the main elements of our proof, found in \S\ref{S:main_proof}, can be generalized.

\section{Statement of results}\label{S:model}
In this section, we define a class of dynamical systems with two
nearly invariant (metastable) subsets. They are
perturbations of a one-dimensional piecewise smooth expanding map
with exactly two invariant subintervals $I_\lt$ and $I_\rt$ of positive Lebesgue measure.
On each of
these intervals, the unperturbed system has a unique \acim.
The perturbations break this invariance by introducing what we consider to be holes
in the intervals; the hole(s) in $I_\lt$ map to $I_\rt$ and vice versa.  Each perturbed
system will have only one \acim, and we will determine an asymptotic
formula for its density in terms of the invariant densities of the
unperturbed system.

Let $I=[0,1].$ In this paper, a map $T:I\circlearrowleft$ is called
a piecewise $C^2 $ map with $\C=\{ 0=c_0 < c_1<  \cdots < c_d=1\}$
as a critical set if for each $i$, $T\vert_{(c_i,c_{i+1})}$ extends
to a $ C^2$ function on a neighborhood of $[c_i,c_{i+1}]$.  We call $T$ uniformly expanding if its minimum expansion, $\inf_{x\in I\setminus \C_0}|T_0'(x)|$, is greater than 1.  As is customary for piecewise smooth maps, we consider $T$ to be bi-valued at points $c_i\in\C$ where it is discontinuous. In such cases we let
$T(c_i)$ be both values obtained as $x$ approaches $c_i$ from either side, and $T(c_{i\pm})$ the corresponding right and left limits.
If $a,b\in\C$, $T\vert_{[a,b]}$ will be used to specifically denote
the restriction of $T$ with $T\vert_{[a,b]}(a)=T(a_+)$ and $T\vert_{[a,b]}(b)=T(b_-)$.

We use $\leb $ to denote normalized Lebesgue measure on $I$ and $L^1 $ to denote the space of
Lebesgue integrable functions on $I$, with norm $\abs{f}_{L^1} =\int_I \abs{f(x)}\,dx $.
Also, for $f:I\rightarrow \mathbb{C}$, we let $\abs{f}_\infty$ be the supremum of $f$ over $I$ and $\var{f} $ be the total variation of $f$ over $I$; that is,
\[
\var{f}= \sup\{ \sum_{i=1}^n|f(x_{i})-f(x_{i-1})|: n\geq 1, 0\leq x_0<x_1<\dots<x_n\leq 1 \}.
\]

For clarity of presentation, we do not state our results under the
broadest possible assumptions.  However, see \S\ref{S:gen} for a
number of relaxations of the hypotheses below.

\subsection{The initial system and its perturbations}\label{S:T_0}
We assume that the unperturbed system is a piecewise $ C^2$ uniformly expanding map
$T_0:I\circlearrowleft$ with $\C_0=\{0= c_{0,0}< c_{1,0}<  \cdots <
c_{d,0}=1\}$ as a critical set.  There is a boundary point $\bp\in(0,1)$ such that $I_\lt :=
[0,\bp]$ and $I_\rt := [\bp,1]$ are invariant under $ T_0$,
i.e.~for $*\in\{\lt,\rt\}$, $T_0\vert_{I_*}(I_*)\subset I_*$.
The existence of an \acim~of bounded variation for $T_0\vert_{I_*}$ is guaranteed by \cite{LasotaYorke}.  We assume in addition:

\begin{enumerate}

\item[(I1)] \emph{Unique \acims on the initially invariant set.}\\
$T_0\vert_{I_*}$ has only one \acim~$\mu_*$, whose density is denoted by
$\phi_* := d\mu_*/dx$.
\end{enumerate}
The uniqueness of such an
\acim can be guaranteed by transitivity or by additional conditions described in \cite{LiYorke}.
From (I1), it follows that all \acims~of $T_0$ are
convex combinations of the ergodic ones, $\mu_\lt$ and $\mu_\rt$.

We define the points in $H_0:=T_0^{-1}\{\bp\} \setminus \{\bp\}$ to be \emph{infinitesimal
holes}. These are all points that map to the boundary point $\bp$, except possibly $\bp$ itself.  Our reasons for excluding $b $ from the set of infinitesimal holes will be explained in \S\ref{S:gen}.
An immediate consequence of this definition is that $H_0\subset \C_0$.

\begin{enumerate}
\item[(I2)] \emph{No return of the critical set to the infinitesimal holes}. \\
For every $k>0$, $(T_0^k \C_0)\cap H_0=\emptyset$.
\end{enumerate}
As we will see in \S\ref{S:densitiesproof}, this implies that $\phi_*$ is
continuous at each of the infinitesimal holes in $I_*$.

\begin{enumerate}
\item[(I3)] \emph{Positive \acims~at infinitesimal holes.\\}
$\phi_\lt$  is positive at each of the points in $H_0\cap I_\lt $, and $\phi_\rt$  is positive at each of the points in $H_0\cap I_\rt $.
\end{enumerate}
For example, this will be the case if $T_0\vert_{I_\lt}$ and $T_0\vert_{I_\rt}$ are weakly
covering,\footnote{A piecewise expanding map $T:I\circlearrowleft$ with $\C=\{ 0=c_0 < c_1<  \cdots < c_d=1\}$ as a critical set
is weakly covering if there is some $N$ such that for every $i$, $\cup_{k=0}^{N}T^k([c_i,c_{i+1}])=I$} see~\cite{Liverani}.

\begin{enumerate}
\item[(I4)] \emph{Restriction on periodic critical points.\\}
Either
\begin{enumerate}
\item[(I4a)] $\inf_{x\in I\setminus
\C_0}|T_0'(x)|>2$, or
\item[(I4b)] $T_0 $ has no periodic critical points, except possibly that $0 $ or $1 $ may be fixed points.
\end{enumerate}
\end{enumerate}
Because $T_0 $ may be bi-valued at points in $\C_0$, a critical point $c_{i,0} $ is considered periodic if there exists $n>0 $ such that $c_{i,0}\in T_0^n(c_{i,0})$.
Condition (I4) is necessary in order to ensure that the perturbed systems defined below satisfy uniform Lasota-Yorke estimates.
Since we cannot exclude the possibility of the forward orbit of a critical point containing other critical points, these uniform estimates do not follow directly from the original paper \cite{LasotaYorke}, but rather from later works, see \S\ref{S:densitiesproof}.

For what follows, we consider $C^2$-small perturbations
$T_{\epsilon}:I\circlearrowleft$ of $T_0$ for $\ep>0$.  This means that a critical set for $T_{\ep}$ may be chosen as
 $\C_\ep=\{0=c_{0,\ep}< c_{1,\ep}<  \cdots <
c_{d,\ep}=1\}$, where for each $i $, $\ep\mapsto c_{i,\ep}$ is a $C^2 $ function for $\ep\geq 0 $.  Furthermore, there exists  $\del>0 $ such that for all sufficiently small $\ep$, there exists a $C^2 $ extension $\hat T_{i,\ep}:[c_{i,0}-\del,c_{i+1,0}+\del]\rightarrow \mathbb{R}$ of $T_{\ep}\vert_{[c_{i,\ep},c_{i+1,\ep}]}$, and $\hat T_{i,\ep} \rightarrow \hat T_{i,0}$ in the $C^2$ topology.
We also assume:
\begin{enumerate}

\item[(P1)] \emph{Unique \acim.}\\
 For $\ep>0$, $T_{\epsilon}$ has only one \acim~$\mu_\ep$, with density
 $\phi_\ep := d\mu_\ep/dx$.

\item[(P2)] \emph{Boundary condition.} \\
The boundary point does not move, and no holes are created near the boundary; precisely,
\begin{itemize}
  \item[(P2a)] If $b\notin \C_0$, then necessarily $T_0(\bp)=\bp$. We assume further that for all $\ep>0 $, $T_\ep(\bp)=\bp$.
  \item[(P2b)] If $b\in \C_0$, we assume that $T_{0}(\bp_-)< \bp<T_{0}(\bp_+)$, and also that $b\in \C_\ep$ for all $\ep $.
\end{itemize}
\end{enumerate}

If the boundary point does move under the perturbation, condition (P2) often can be satisfied by performing a small change of coordinates; see \S\ref{S:gen}.

\subsection{Main results}\label{S:results}

The central question of this study is, for small $\ep$, how
can we asymptotically approximate $\mu_{\ep}$ by a convex combination of
$\mu_\lt$ and $\mu_\rt$?  To that end, let $H_{\lt,\ep}:=I_\lt\cap
T_{\ep}^{-1}(I_\rt)$ and $H_{\rt,\ep}:=I_\rt\cap
T_{\ep}^{-1}(I_\lt)$.  We refer to these sets as \textit{holes}.
Once a $T_\ep$-orbit enters a hole, it leaves one of the invariant
sets for $T_0$ and continues in the other. As $\ep\rightarrow 0$,
the holes converge  (in the Hausdorff metric) to the infinitesimal holes from which they arise.

Condition (P1) ensures that for $\ep>0$, at least one of the holes
has positive Lebesgue measure.  In view of (I3), without loss of generality, we
suppose that $\mu_{\lt}(H_{\lt,\ep})>0$ and define
\[
    l.h.r.= \lim_{\ep\rightarrow 0}
\frac{\mu_\rt(H_{\rt,\ep})}{\mu_\lt(H_{\lt,\ep})},
\]
if the limit exists.  ($l.h.r. $ stands for \emph{limiting hole ratio}.)

\begin{thm}[\textbf{Approximation of the invariant density}]\label{thm1}
Consider the family of perturbations $T_\ep$ of $T_0$ under the
assumptions stated in \S\ref{S:T_0}.  Suppose that $l.h.r.$ above exists.  Then
as $\ep\rightarrow 0$,
\[
    \phi_\ep\overset{L^1}{\longrightarrow} \al
    \phi_\lt+(1-\al)\phi_\rt,
    \quad\text{ where \:\;}\frac{\al}{1-\al}=l.h.r..
\]
\end{thm}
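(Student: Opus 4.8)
The plan is to work with the Perron--Frobenius (transfer) operators $\L_\ep$ associated to $T_\ep$ acting on the space $BV$ of functions of bounded variation, and to exploit two facts: first, a uniform Lasota--Yorke inequality $\var{\L_\ep^n f}\le \cly\theta^n\var{f}+\cdis\abs{f}_{L^1}$ valid for all small $\ep\ge 0$ (this is where (I4) is needed, and it follows from the cited refinements of \cite{LasotaYorke}); second, that $\phi_\ep$ is the normalized fixed point of $\L_\ep$ and, by the uniform bound, the family $\{\phi_\ep\}$ is precompact in $L^1$. So it suffices to show that \emph{every} $L^1$-limit point $\phi_*$ of $\phi_{\ep_n}$ (along $\ep_n\to 0$) equals $\al\phi_\lt+(1-\al)\phi_\rt$. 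Since $\L_{\ep_n}\to\L_0$ in an appropriate sense and $\L_{\ep_n}\phi_{\ep_n}=\phi_{\ep_n}$, any such limit $\phi_*$ is $\L_0$-invariant, hence by (I1) is a convex combination $\al\phi_\lt+(1-\al)\phi_\rt$ with $\al\in[0,1]$. The whole content of the theorem is therefore to \emph{identify} $\al$, i.e.\ to show $\al/(1-\al)=l.h.r.$

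To pin down $\al$, I would set up a conservation-of-mass (flux) argument across the boundary $\bp$. Write $\phi_\ep=\phi_\ep\mathbf{1}_{I_\lt}+\phi_\ep\mathbf{1}_{I_\rt}$. Applying $\L_\ep$ and integrating over $I_\lt$, the mass that $T_\ep$ transports from $I_\lt$ into $I_\rt$ in one step is $\int_{H_{\lt,\ep}}\phi_\ep\,dx=\mu_\ep(H_{\lt,\ep})$, and similarly the mass transported from $I_\rt$ into $I_\lt$ is $\mu_\ep(H_{\rt,\ep})$. Invariance of $\mu_\ep$ forces these two to be \emph{equal}: $\mu_\ep(H_{\lt,\ep})=\mu_\ep(H_{\rt,\ep})$. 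Now I want to replace $\phi_\ep$ by its limit on each hole. Because the holes $H_{*,\ep}$ shrink to the infinitesimal holes $H_0\subset\C_0$ as $\ep\to 0$, and because by (I2) the limiting density $\phi_\lt$ (resp.\ $\phi_\rt$) is \emph{continuous} at each point of $H_0\cap I_\lt$ (resp.\ $H_0\cap I_\rt$) and positive there by (I3), one expects
\[
\mu_\ep(H_{\lt,\ep})=\int_{H_{\lt,\ep}}\phi_\ep\,dx \approx \al\int_{H_{\lt,\ep}}\phi_\lt\,dx \approx \al\,\mu_\lt(H_{\lt,\ep}),
\]
and likewise $\mu_\ep(H_{\rt,\ep})\approx(1-\al)\,\mu_\rt(H_{\rt,\ep})$. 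Combining with the flux equality and dividing by $\mu_\lt(H_{\lt,\ep})$ gives $\al \approx (1-\al)\,\dfrac{\mu_\rt(H_{\rt,\ep})}{\mu_\lt(H_{\lt,\ep})}$; letting $\ep\to0$ along the chosen subsequence yields $\al/(1-\al)=l.h.r.$, which is independent of the subsequence, proving convergence of the full family.

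The main obstacle is making rigorous the step ``$\int_{H_{\lt,\ep}}\phi_\ep\,dx \approx \al\int_{H_{\lt,\ep}}\phi_\lt\,dx$'' — i.e.\ controlling $\phi_\ep$ \emph{pointwise near the holes}, not merely in $L^1$. An $L^1$ bound is useless here since we are integrating over a set $H_{\lt,\ep}$ of vanishing measure; we need that the \emph{local} behavior of $\phi_\ep$ on $H_{\lt,\ep}$ matches that of $\al\phi_\lt$ up to a relative error tending to $0$. This is exactly the ``locations and sizes of jumps'' analysis advertised in the introduction: decompose each $\phi_\ep$ (of bounded variation) into a regular part plus a saltus part supported on the forward orbits of the critical points $\C_\ep$, show the saltus part contributes negligibly near $H_0$ (using (I2), which keeps critical orbits away from $H_0$, so that $\phi_\ep$ has uniformly small variation on small neighborhoods of the infinitesimal holes), and show the regular part converges locally uniformly there to the regular part of $\al\phi_\lt+(1-\al)\phi_\rt$. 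One then also needs the lower bound from (I3) to ensure the denominator $\mu_\lt(H_{\lt,\ep})$ does not decay faster than $\leb(H_{\lt,\ep})$, so that the relative errors really are $o(1)$. Establishing this uniform-near-the-holes regularity of $\phi_\ep$, and bookkeeping the jump contributions along critical orbits, is the technical heart of the argument; once it is in place, the flux computation above closes the proof cleanly.
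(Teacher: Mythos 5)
Your proposal follows essentially the same route as the paper: uniform Lasota--Yorke estimates give precompactness of $\{\phi_\ep\}$ in $L^1$, every subsequential limit is a convex combination $\al\phi_\lt+(1-\al)\phi_\rt$ by $\L_0$-invariance, the flux identity $\mu_\ep(H_{\lt,\ep})=\mu_\ep(H_{\rt,\ep})$ pins down $\al$, and the crux is upgrading $L^1$ convergence to pointwise control near the holes via the regular/saltus decomposition together with (I2) (to keep critical orbits away from $H_0$, so the saltus part has uniformly small variation there) and (I3) (to bound $\mu_\lt(H_{\lt,\ep})/\leb(H_{\lt,\ep})$ below). The technical step you flag as the ``heart of the argument'' is exactly what the paper establishes in Proposition~\ref{P:densities}(ii)--(iii) and the Lemma in \S\ref{S:main_proof}, where a uniform Lipschitz bound on $\phi_\ep^{reg}-\al\phi_\lt^{reg}$ and a uniform smallness bound on $\varsub{[h_\lt-\del,h_\lt+\del]}{\phi_\ep^{sal}-\al\phi_\lt^{sal}}$ are combined with $L^1$ convergence in a contradiction argument to get $\sup_{H_{\lt,\ep}}\abs{\phi_\ep-\al\phi_\lt}\to 0$.
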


We allow for $l.h.r.=+\infty$, in which case $\al=1$.  Several straightforward generalizations of the above result are
discussed in \S\ref{S:gen}.

\begin{remark}
The limit $l.h.r.$ will always exist as long as the perturbations open up holes $H_{\lt,\ep} $ whose size is truly first order in $\ep$:
For simplicity, suppose that there are only two infinitesimal holes, $h_\lt\in I_\lt$ and $h_\rt\in I_\rt$.
Then we can always write $H_{*,\ep}=(h_*-a_*\ep+o(\ep),h_*+b_*\ep+o(\ep))$ for
$*\in\{\lt,\rt\}$, and if $a_\lt+b_\lt>0$, then
\[
    l.h.r.=
    \frac{\phi_\rt(h_\rt)(a_\rt+b_\rt)}{\phi_\lt(h_\lt)(a_\lt+b_\lt)}.
\]
For example, this will be the case if
$T_\ep=T_0+\ep g +o(\ep)$ for some smooth function $g$ with $g(h_l)>0$.
\end{remark}

\begin{remark}
An alternative definition of $l.h.r.$ is as a limit of a
ratio of escape rates: For $*\in\{\lt,\rt\}$, we can consider a
dynamical system with a hole, where orbits stop upon entering
the hole, by using the unperturbed map $T_0\vert_{I_*}$ with
$H_{*,\ep}$ as the hole. See \cite{DemersYoung} for an exposition of
such systems.  Let $R_{*,\ep}$ be the exponential escape rate of Lebesgue measure and suppose that there is only one infinitesimal hole in each initially invariant interval. Then as $\ep\rightarrow 0$, $
\mu_*(H_{*,\ep})/ R_{*,\ep}\rightarrow 1$. \cite{BunimovichYurchenko, KellerLiverani}
\end{remark}

Next, let $\L_\ep$ be the Perron-Frobenius  operator associated with
$T_\ep$ acting on the Banach space $BV=\{f:I\rightarrow \mathbb{C}:\var
{f}<\infty \}$\footnote{Technically, two functions of bounded variation are considered equivalent if they differ on at most a countable set.  Here and elsewhere, we generally ignore such distinctions.} with the variation norm, and let $\sig(\L_\ep) $ denote the spectrum of $\L_\ep $.  It follows from e.g. \cite[Thm. 8.3(b)]{Keller89} that $\L_0$ has one as an isolated eigenvalue of multiplicity two.
Furthermore in \cite{KellerLiverani99} the authors show that
for fixed small $\del>0$ and for every $\ep>0$ small enough, $\sig(\L_\ep)\cap B_\del (1)$ consists of exactly two eigenvalues, 1 and $\rho_\ep<1$, each of multiplicity 1.  As $\ep\rightarrow 0$,
$\rho_\ep\rightarrow 1$ and the total spectral projection of $\L_\ep$ associated with $\sig(\L_\ep)\cap B_\del (1)$ converges (at a given rate in an appropriate norm) to the total spectral projection of $\L_0$ associated with $\sig(\L_0)\cap B_\del (1)$.  Note that in \S\ref{S:densitiesproof} we will show that the assumptions of \cite{KellerLiverani99} are satisfied in our context.

\begin{thm}[\textbf{Characterization of the eigenspace corresponding to the lesser eigenvalue}]\label{thm2}

For each $\ep>0$ small enough, there is a unique real-valued function $\psi_\ep\in BV$
satisfying $\L_\ep\psi_\ep=\rho_\ep\psi_\ep$,
$\abs{\psi_\ep}_{L^1}=1$, and $\int_{I_\lt} \psi_\ep dx>0$. As
$\ep\rightarrow 0$,
\[
    \psi_\ep\overset{L^1}{\longrightarrow} \frac{1}{2}
    \phi_\lt-\frac{1}{2}\phi_\rt.
\]
\end{thm}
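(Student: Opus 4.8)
The plan is to leverage Theorem~\ref{thm1} together with the spectral stability of $\mathcal{L}_\ep$ near the eigenvalue $1$. First I would set up the two-dimensional total spectral projection $\Pi_\ep$ onto the span of the eigenvectors of $\mathcal{L}_\ep$ associated with $\sigma(\mathcal{L}_\ep)\cap B_\del(1) = \{1,\rho_\ep\}$. By the results of \cite{KellerLiverani99} quoted just before the theorem, $\Pi_\ep \to \Pi_0$ in an appropriate (weak) norm, where $\Pi_0$ projects onto $\mathrm{span}\{\phi_\lt,\phi_\rt\}$. Thus the range of $\Pi_\ep$ is spanned by $\phi_\ep$ (the eigenvector for $1$) and $\psi_\ep$ (the eigenvector for $\rho_\ep$), and any $L^1$-limit point of $\psi_\ep$ must lie in $\mathrm{span}\{\phi_\lt,\phi_\rt\}$. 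Since $\int_I \psi_\ep\,dx = 0$ (because $\mathcal{L}_\ep$ preserves integrals and $\rho_\ep\neq 1$), any such limit point is a multiple of $\phi_\lt-\phi_\rt$; normalizing by $|\psi_\ep|_{L^1}=1$ and $\int_{I_\lt}\psi_\ep>0$ pins the candidate limit down to $\tfrac12\phi_\lt-\tfrac12\phi_\rt$, \emph{provided} I can show $\psi_\ep$ is $L^1$-precompact and that the limit is not degenerate (i.e., that $\int_{I_\lt}\psi_\ep$ stays bounded away from $0$, equivalently the limit is not identically zero after normalization). Precompactness follows from the uniform Lasota--Yorke bounds established in \S\ref{S:densitiesproof} (which give a uniform variation bound on $\psi_\ep$ after normalization, since $\psi_\ep = \rho_\ep^{-1}\mathcal{L}_\ep\psi_\ep$ and $|\psi_\ep|_{L^1}=1$) combined with Helly's selection theorem.

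The crux is to identify \emph{which} multiple of $\phi_\lt-\phi_\rt$ appears, and in particular to rule out $\psi_\ep$ escaping to a limit of the form $\gamma(\phi_\lt-\phi_\rt)$ with the ``wrong'' sign split between $I_\lt$ and $I_\rt$ — but in fact the only ambiguity is the overall normalizing constant $\gamma>0$, and this is fixed by $|\phi_\lt-\phi_\rt|_{L^1}$. Here I would use that $\phi_\lt$ is supported on $I_\lt$ and $\phi_\rt$ on $I_\rt$ with $\int\phi_\lt = \int\phi_\rt = 1$, so $|\gamma(\phi_\lt-\phi_\rt)|_{L^1} = \gamma(\,|\phi_\lt|_{L^1} + |\phi_\rt|_{L^1}\,) = 2\gamma$; setting this equal to $1$ forces $\gamma = \tfrac12$, and $\int_{I_\lt}\gamma(\phi_\lt-\phi_\rt) = \gamma > 0$ is consistent with the sign normalization. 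The remaining issue is to confirm that the $L^1$-limit of $\psi_\ep$ is genuinely a \emph{nonzero} element of $\mathrm{span}\{\phi_\lt,\phi_\rt\}$ rather than $0$ — i.e., that normalization survives the limit. This is where I expect the main work: one must show that $\psi_\ep$ does not develop a component transverse to $\mathrm{span}\{\phi_\lt,\phi_\rt\}$ that carries the full $L^1$-mass. The cleanest route is to invoke the convergence of the \emph{full} rank-two projection $\Pi_\ep\to\Pi_0$ more carefully: write $\psi_\ep = \Pi_\ep\psi_\ep$, decompose $\Pi_0 = $ (rank-one projection onto $\mathrm{span}(\al\phi_\lt+(1-\al)\phi_\rt)$ along a complementary eigenfunctional) $\oplus$ (rank-one projection onto $\mathrm{span}(\phi_\lt-\phi_\rt)$), and use that $\psi_\ep$ is an eigenvector orthogonal (via the dual eigenprojection) to $\phi_\ep$; passing to the limit, the $\phi_\ep$-component vanishes, leaving a nonzero multiple of $\phi_\lt-\phi_\rt$ whose normalization is then forced as above.

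A subtle point I would need to handle: the convergence $\Pi_\ep\to\Pi_0$ in \cite{KellerLiverani99} is typically in a norm weaker than the $BV$-norm (an $L^1\to BV$ type operator norm), so I cannot immediately conclude $BV$-convergence of $\psi_\ep$; but combined with the uniform $BV$-bound on $\psi_\ep$ and an interpolation/compactness argument (the embedding $BV\hookrightarrow L^1$ is compact), this upgrades to $L^1$-convergence, which is all the theorem claims. I would also double-check the sign normalization is well-posed for small $\ep$, i.e., that $\int_{I_\lt}\psi_\ep \neq 0$ so that the sign can be chosen; this again follows once we know the limit is $\pm(\tfrac12\phi_\lt - \tfrac12\phi_\rt)$, since then $\int_{I_\lt}\psi_\ep \to \pm\tfrac12 \neq 0$. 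The main obstacle, to reiterate, is the non-degeneracy of the limit — establishing that the normalized lesser-eigenvalue eigenfunction cannot leak its mass outside the limiting two-dimensional eigenspace — and the natural tool is the operator-level spectral stability of \cite{KellerLiverani99} rather than any hands-on estimate on $\psi_\ep$ itself.
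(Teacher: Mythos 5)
Your proposal is essentially correct and follows the same outline as the paper's proof (integral zero, uniform variation bound, Helly compactness, identification of the limit, normalization). However, you single out as ``the crux'' and ``the main obstacle'' the non-degeneracy of the limit—that $\psi_\ep$ might leak its $L^1$-mass outside $\mathrm{span}\{\phi_\lt,\phi_\rt\}$ so that the normalized limit vanishes—and you propose to rule this out via convergence of spectral projections, a decomposition of $\Pi_0$, and dual eigenprojections. This difficulty does not exist. Once you extract an $L^1$-convergent subsequence $\psi_{\ep_j}\to\psi_0$ (which you get from the uniform $\var{\psi_\ep}$ bound and the compact embedding $BV\hookrightarrow L^1$), the $L^1$-norm passes to the limit by continuity: $\abs{\psi_0}_{L^1}=\lim\abs{\psi_{\ep_j}}_{L^1}=1$, so $\psi_0\neq 0$. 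Strong $L^1$-convergence forecloses any mass escaping transversally.

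The paper's proof is accordingly brief: $\int\psi_\ep\,dx=0$; any $L^1$-limit point $\psi_0$ is $\L_0$-invariant (because $\rho_\ep\to 1$ and $\L_{\ep_j}\psi_{\ep_j}=\rho_{\ep_j}\psi_{\ep_j}$) and hence lies in $\mathrm{span}\{\phi_\lt,\phi_\rt\}$; combining $\abs{\psi_0}_{L^1}=1$ with $\int\psi_0\,dx=0$ forces $\psi_0=\pm\frac12(\phi_\lt-\phi_\rt)$; the sign condition $\int_{I_\lt}\psi_\ep\,dx>0$ singles out the claimed limit. Your route through $\Pi_\ep\to\Pi_0$ for the step ``limit lies in the unperturbed eigenspace'' is valid but heavier than the direct argument from the eigenvalue equation. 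The one ingredient you treat a bit lightly is the uniform variation bound on $\psi_\ep$ (Lemma~\ref{L_psibound}): the paper derives it by applying the uniform Lasota--Yorke inequality \eqref{E:unifLYinequality2} with $f=\psi_\ep$ and $\L_\ep^n\psi_\ep=\rho_\ep^n\psi_\ep$, choosing $n$ fixed large and $\ep$ small so that $\rho_\ep^n-\beta^n$ is bounded below, giving $\var{\psi_\ep}\leq\cly/(\rho_\ep^n-\beta^n)\leq 4\cly$.
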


\begin{remark}
Suppose $\mu_\lt$ and $\mu_\rt$ are both mixing for $T_0$.
Given a typical initial density $f\in BV$ (i.e. one with nonzero coefficient of $\psi_\ep$ when expressed as a linear combination of eigenvectors), as $\L_{\ep}^n f \rightarrow \phi_{\ep}$, the deviation  $\L_{\ep}^n f - \phi_{\ep}$ becomes approximately proportional to $\psi_\ep$ for $n$ large. In this case, Theorem~\ref{thm2} implies roughly that for $\ep$ small, $\L_{\ep}^n f$ becomes close to a linear combination of $\phi_l$ and $\phi_r$ more quickly than it comes close to the specific linear combination $ \al\phi_\lt+(1-\al)\phi_\rt$.
\end{remark}

\subsection{Examples}\label{S:ex}

The three piecewise linear maps shown in Figure~\ref{figure:examples} satisfy assumptions (I1)-(I4) from \S\ref{S:T_0}.
In all three cases, normalized Lebesgue measure restricted to the left or right intervals is the unique \acim of the corresponding restricted system.

\comment{
\begin{figure}[htbp]
\begin{center}
\resizebox{9cm}{!}{\input 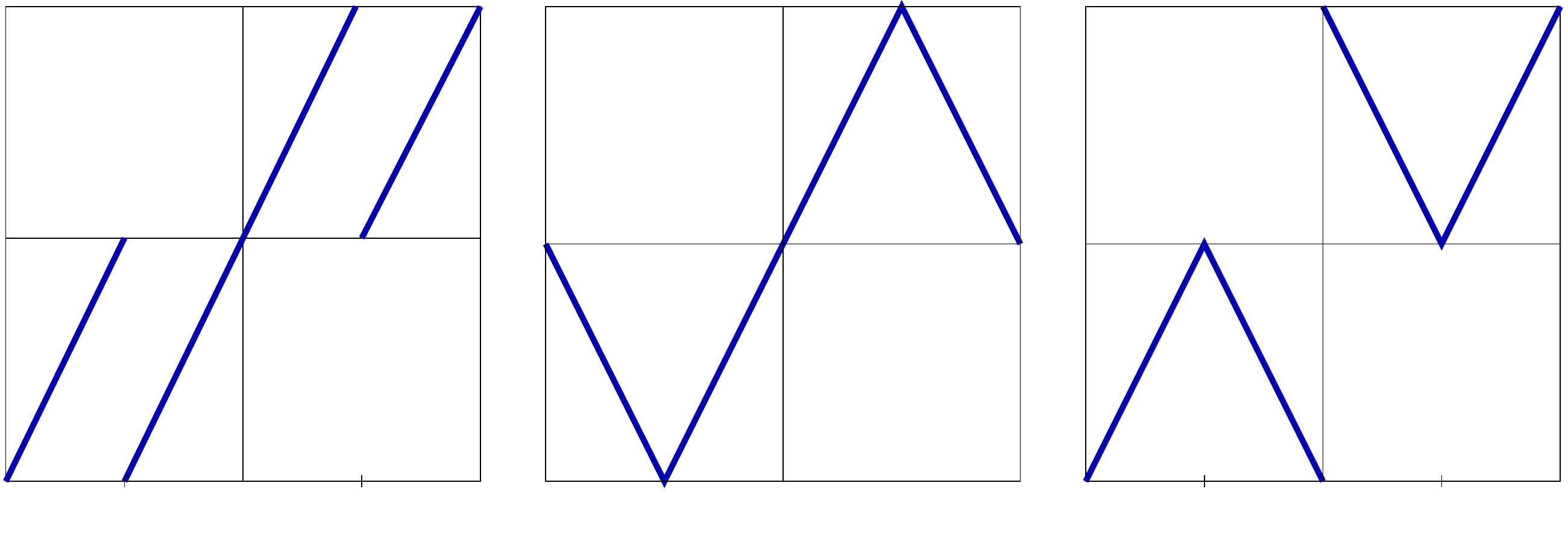_t}
\caption{Piecewise linear maps giving rise to metastable systems.}
\label{figure:examples}
\end{center}
\end{figure}
}

Adding a small $C^2$ perturbation $g:I\times [0,\ep_0)\rightarrow I$ such that $g(\cdot,0)\equiv0$ and for $\ep\neq 0$, $g(\bp,\ep)=0$, $g(h_\lt,\ep)>0$ and $g(h_\rt,\ep)<0$ gives a one-parameter family of perturbations $T_{\ep}:=T_0+g(\cdot,\ep)$ satisfying assumptions (P1) and (P2).

If $\lim_{\ep\rightarrow0}\frac{ \leb(H_{\rt,\ep}) }{ \leb(H_{\lt, \ep}) }=l.h.r.$, by Theorem~\ref{thm1}, the invariant densities
$\phi_{\ep}$ associated to $T_{\ep}$ satisfy
\[
    \phi_\ep\overset{L^1}{\longrightarrow} \al \leb|_{I_\lt}+(1-\al)\leb|_{I_\rt},
    \text{ where \:\;}\frac{\al}{1-\al}=l.h.r..
\]
The possibility $l.h.r.=\infty$ is allowed, and in this case,
\[
   \phi_\ep\overset{L^1}{\longrightarrow}  \leb|_{I_\lt}.
\]

Other initial maps $T_0$ for which Theorems~\ref{thm1} and \ref{thm2} are applicable are shown in  Figure~\ref{figure:moreExamples}.
\comment{
\begin{figure}[htbp]
\begin{center}
\resizebox{7cm}{!}{\input 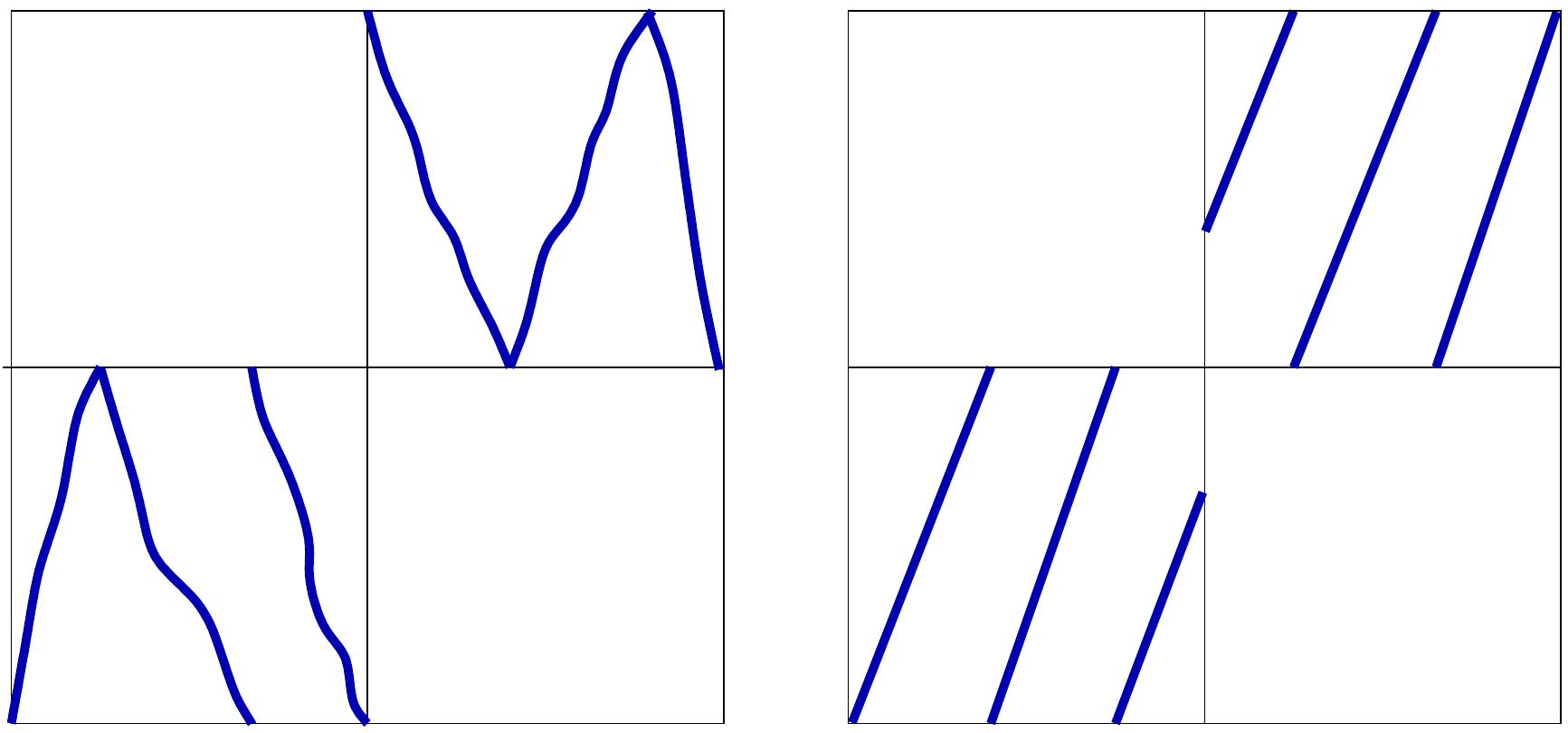_t}
\caption{Examples of initial maps $T_0 $ which give rise to metastable
systems for which our results hold.}
\label{figure:moreExamples}
\end{center}
\end{figure}
}

\subsection{Generalizations}\label{S:gen}

Our results extend, with essentially the same proofs, to yield the following straightforward generalizations.

\subsubsection*{Multiple invariant sets.}
We can also allow $T_0$ to have $m\geq 2$  invariant sets, provided it has a unique \acim  on each of them. The invariant sets may be intervals or a union of intervals. See Figure~\ref{figure:othermaps}. In this case, the unique invariant density $\phi_\ep$ of $T_\ep$ converges to a convex combination of the initial ergodic invariant densities as $\ep\rightarrow 0$. The coefficients may be determined from $m-1 $ linear equations involving limits of the quotients of measures of appropriate holes.  If $m>2 $, the analogue of Theorem~\ref{thm2} says only that the eigenfunctions for $T_\ep$ whose eigenvalues approach 1 limit on the space of eigenfunctions for $T_0$ with integral 0.

\comment{
\begin{figure}[htbp]
\begin{center}
\resizebox{7cm}{!}{\input 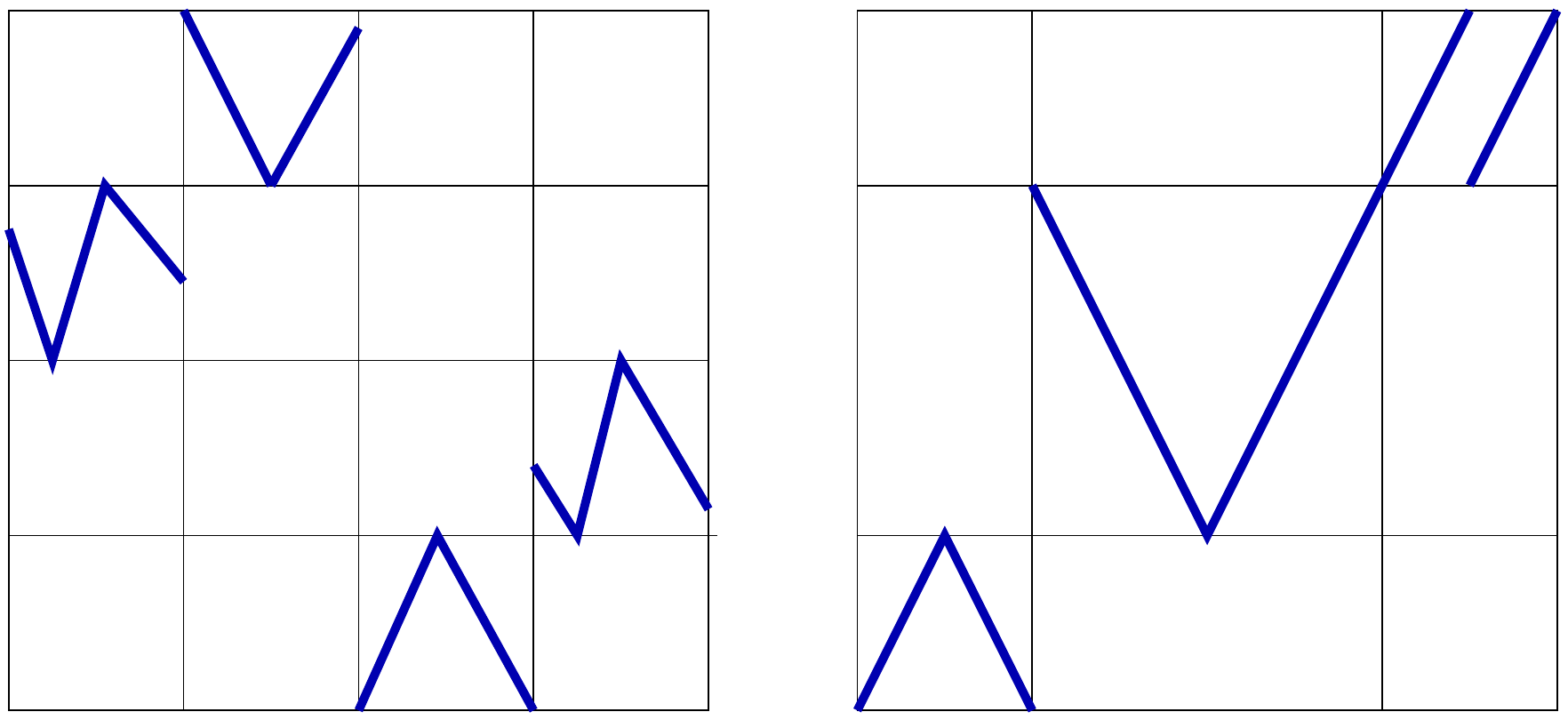_t}
\caption{Initial maps $T_0 $ which give rise to metastable systems for which our results can be generalized.}
\label{figure:othermaps}
\end{center}
\end{figure}
}

\subsubsection*{Boundary condition.}
The restriction that the boundary point does not move when $T_0 $ is perturbed is inessential; when it is relaxed, it simply means that the metastable states for $T_\ep $ are slight perturbations of the initial invariant sets.
In this case, a smooth change of coordinates restores the hypothesis (P2).
For example, when $\bp\notin \C_0$ assumption (P2a) is actually superfluous, although the definitions in the statement of Theorem~\ref{thm1} must be modified slightly.  As remarked earlier, necessarily $T_0(\bp)=\bp$.  Furthermore, the graph of $T_0 $ intersects the diagonal transversely at this point.  Thus for all small $\ep>0$, there is a unique point $\bp_\ep$ near $\bp $ satisfying $T_\ep(\bp_\ep)=\bp_\ep$.  Then the quasi-invariant sets for $T_\ep $ are $I_{\lt,\ep}:= [0,\bp_\ep] $ and $I_{\rt,\ep}:= [\bp_\ep,1] $, and the corresponding holes are defined by $H_{\lt,\ep}:=I_{\lt,\ep}\cap
T_{\ep}^{-1}(I_{\rt,\ep})$ and $H_{\rt,\ep}:=I_{\rt,\ep}\cap
T_{\ep}^{-1}(I_{\lt,\ep})$.  Aside from these minor modifications, the statements and proofs of our main results remain the same.

When $\bp\in \C_0$, (P2b) can be relaxed by no longer requiring that $\bp\in\C_\ep $ for all $\ep$.  In this case, when $\ep>0$, $\C_\ep$ contains a point $\bp_\ep $ that converges to $\bp $ as $\ep\rightarrow 0$, and the quasi-invariant sets and holes must be redefined as above.  However, it is still essential to assume that no holes are created near the boundary, which we enforce with the assumption that $T_{0}(\bp_-)< \bp<T_{0}(\bp_+)$.  For example, if $T_\ep(x)=[(3x \mod 1/2) +3\ep]\cdot 1_{x<1/2} +[(-3x \mod 1/2) +1/2-\ep]\cdot 1_{x>1/2}$, then all of our assumptions aside from (P2) hold, with $\bp=1/2 $, $\mu_*=\leb|_{I_*}$, and $l.h.r.=1/3 $.  However, as $\ep\rightarrow 0$, $\phi_\ep\overset{L^1}{\longrightarrow}\phi_\rt $.  The difficulty is that orbits ejected from $I_\rt$ by $T_\ep $ immediately return to $I_\rt $.

\subsubsection*{Multiple limiting densities.}
When the limit $l.h.r.$ in \S\ref{S:results} does not exist, we let
\[
\underline{l.h.r.}=\liminf_{\ep\rightarrow 0}\frac{\mu_\rt(H_{\rt,\ep})}{\mu_\lt(H_{\lt,\ep})},\quad
\overline{l.h.r.}=\limsup_{\ep\rightarrow 0} \frac{\mu_\rt(H_{\rt,\ep})}{\mu_\lt(H_{\lt,\ep})}.
\]
Since the function
$\frac{\mu_\rt(H_{\rt,\ep})}{\mu_\lt(H_{\lt,\ep})}$ is continuous in $\ep>0$,
our arguments show that the set of limit points for $\phi_\ep$ as $\ep\rightarrow0$ is precisely
\[
\left\{ \t{\alpha}\phi_\lt+(1-\t{\alpha})\phi_\rt: \frac{\t{\alpha}}{1-\t{\alpha}}\in [\underline{l.h.r.},\overline{l.h.r.}]\right\}.
\]

\section{Proofs of the main theorems}\label{S:mainthm}

In this section, we briefly state the main properties of the invariant densities that will be needed before presenting the proofs of Theorems~\ref{thm1} and \ref{thm2}.  For notational convenience, we will assume that there are only two infinitesimal holes, $h_\lt\in I_\lt$ and $h_\rt\in I_\rt$; the proof without this restriction is essentially unchanged.

\subsection{Properties of the invariant densities} \label{S:propdensities}

Here we record some of the relevant characteristics of the density functions $\phi_{\ep},\phi_\lt,\phi_\rt$.  First, if $f\in BV $, we can -- and will -- choose a representative of $f$ with only regular discontinuities, i.e.~for each $x$, $f(x)=(\lim_{y\rightarrow x^-}f(y)+\lim_{y\rightarrow x^ +}f(y))/2$.  Then, following~\cite{Baladi}, we can uniquely decompose $f=f^{reg}+f^{sal}$ into the sum of a regular and a singular (or saltus) part.  Here $f^{reg}$ is continuous with $\var{f^{reg}}\leq \var{f} $, and $f^{sal}$ is the sum of at most countably many step functions. We write $f^{sal}=\sum_{u\in\mc{S}} s_u H_{u},$ where $\mc{S} $ is the discontinuity set of $f $, $s_u $ is the \emph{jump} of $f $ at $u$, and $H_u(x)=-1$ if $x<u$, $-\frac{1}{2}$ if  $x=u$ and 0 if $x>u$. This representation imposes the boundary condition $f^{sal}(1)=0$. Furthermore,  $\var{f^{sal}}=\sum_{u\in\mc{S}} \abs{s_u}\leq \var{f} $.

\begin{prop}[\textbf{Key facts about the invariant densities}]\label{P:densities}
There exists $\ep_0 > 0 $ such that:
\begin{enumerate}

\item[(i)] \emph{Uniform bound on the variations of the invariant densities.}\\
  \[
        \sup_{0<\ep<\ep_0} \var{\phi_\ep}<+\infty.
  \]
  Also, $\var{\phi_\lt},\var{\phi_\rt}<+\infty $.

\item[(ii)] \emph{Uniform bound on the Lipschitz constant of the regular parts}.\\
  For $0<\ep<\ep_0$, each of the $\phi_\ep^{reg} $ is Lipschitz continuous with
  constant $\lip{\phi_\ep^{reg}}$, and
  \[
        \sup_{0<\ep<\ep_0} \lip{\phi_\ep^{reg}}<+\infty.
  \]
  Also, $\phi_\lt^{reg}$ and $\phi_\rt^{reg}$ are Lipschitz.

\item[(iii)] \emph{Approximate continuity near the infinitesimal holes.} \\
    For $*\in\{\lt,\rt\}$, for each $\eta>0$, there exists $\delta>0$ such that for all $0<\ep<\ep_0$,
    \[
        \varsub{[h_*-\delta,h_*+\delta]}{\phi_\ep^{sal}}: =\text{the variation of } \phi_\ep^{sal} \text{ over } [h_*-\delta,h_*+\delta]<\eta.
    \]
    Also, $\phi_* $ is continuous at $h_* $.

\end{enumerate}
\end{prop}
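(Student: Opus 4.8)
The plan is to prove (i) by a Lasota--Yorke estimate that is uniform in $\ep$, and to prove (ii) and (iii) together by exhibiting a convex, $L^1$-closed set of densities that contains each $\phi_\ep$ and has the Lipschitz and jump-decay properties built in. For (i), I would first establish an inequality $\var{\L_\ep^{n_0}f}\le\Lambda\var{f}+\cly\abs{f}_{L^1}$ with $\Lambda<1$ and with $n_0,\Lambda,\cly$ independent of $\ep$ for $\ep$ small. The convergence $\hat T_{i,\ep}\to\hat T_{i,0}$ in $C^2$ gives uniform control of the minimum expansion (which stays above $1$, and above $2$ under (I4a)), of the distortion $\sup\abs{T_\ep''}/\abs{T_\ep'}^2$, and of the number of branches, which are the ingredients of such an estimate. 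Because a forward orbit of a critical point may contain other critical points, this is not the inequality of \cite{LasotaYorke} but its refinement in the later literature, for which condition (I4) is exactly what is needed. Feeding the fixed-point equation $\phi_\ep=\L_\ep^{n_0}\phi_\ep$ (with $\abs{\phi_\ep}_{L^1}=1$) into the inequality and iterating gives $\var{\phi_\ep}\le\cly/(1-\Lambda)$, uniformly in $\ep$; the same argument for $T_0\vert_{I_*}$ bounds $\var{\phi_*}$. In particular $\sup_{0<\ep<\ep_0}\abs{\phi_\ep}_\infty<\infty$, which we use below.

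For (ii) and (iii), note first that the discontinuities of $\phi_\ep=\L_\ep^{j}\phi_\ep$ lie on the forward critical orbit $\Sigma_\ep:=\bigcup_{j\ge1}T_\ep^j(\C_\ep)$, a fixed countable set for each $\ep$; assign to $u\in\Sigma_\ep$ its minimal level $j(u)=\min\{j\ge1:u\in T_\ep^j\C_\ep\}$. The jump of $\L_\ep^{j}g$ at a point $T_\ep^{j}(c)$ comes from the inverse branch(es) born there, carrying $g(\cdot)/\abs{(T_\ep^{j})'(\cdot)}$, so $\abs{s_u}\le\cdis\abs{\phi_\ep}_\infty\lam^{-j(u)}$ for a uniform $\lam>1$; since at most $2(d+1)$ points share each level, $\var{\phi_\ep^{sal}}$ is then dominated by a geometric series. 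Now fix constants $A,B,C$ and let $\mathcal K$ be the set of probability densities $f\in BV$ with $\var f\le A$, with $f^{reg}$ Lipschitz of constant $\le B$, and with $f^{sal}$ supported on $\Sigma_\ep$ and satisfying $\abs{s_u}\le C\lam^{-j(u)}$. This $\mathcal K$ is convex, and it is closed in $L^1$: the geometric bound on the jumps makes the saltus parts tight, so an $L^1$-limit of elements of $\mathcal K$ still has its jumps on the fixed set $\Sigma_\ep$ with the same decay, while its regular part, a uniform limit of $B$-Lipschitz functions, is again $B$-Lipschitz. The recursive one-step estimates --- the Lasota--Yorke inequality, the contraction $\lip{(\L_\ep f)^{reg}}\le\lam^{-1}\lip{f^{reg}}+\clip(\abs f_\infty+\var{f^{sal}})$ (first term from the division by the derivative, second from bounded distortion acting on the smooth tails of $\L_\ep$ applied to step functions), and the jump transfer above --- show that $\var{\L_\ep^k\mathbf1}$, $\lip{(\L_\ep^k\mathbf1)^{reg}}$, and the jump bounds for $\L_\ep^k\mathbf1$ stay bounded uniformly in $k$ and $\ep$; taking $A,B,C$ equal to these bounds puts every $\L_\ep^k\mathbf1$, hence every Ces\`aro average $\tfrac1N\sum_{k<N}\L_\ep^k\mathbf1$, in $\mathcal K$. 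Since these averages converge in $L^1$ to the unique \acim~$\phi_\ep$ and $\mathcal K$ is $L^1$-closed, $\phi_\ep\in\mathcal K$: this gives (ii) and the jump bound $\abs{s_u}\le C\lam^{-j(u)}$ needed next. The analogous argument for $T_0\vert_{I_*}$ handles $\phi_*$.

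To finish (iii), fix $\eta>0$. The jumps of $\phi_\ep^{sal}$ in $[h_*-\delta,h_*+\delta]$ of level $j>K$ contribute at most $\sum_{j>K}2(d+1)C\lam^{-j}$, which is $<\eta$ once $K$ is large, uniformly in $\ep$. The finitely many points $T_\ep^j(c_{i,\ep})$, $1\le j\le K$, depend continuously on $\ep$ and converge as $\ep\to0$ to $T_0^j(c_{i,0})$; by (I2), together with (P2a) or (P2b) to account for $\bp$, none of these limits equals $h_*$, so they stay bounded away from $h_*$, and hence for $\delta$ and $\ep$ small $[h_*-\delta,h_*+\delta]$ contains no level-$\le K$ jump of $\phi_\ep$. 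Therefore $\varsub{[h_*-\delta,h_*+\delta]}{\phi_\ep^{sal}}<\eta$. The same reasoning with $\ep=0$ shows that $h_*$ is not a discontinuity of $\phi_*$ and that the jumps of $\phi_*^{sal}$ accumulating at $h_*$ are summable, so its one-sided limits there agree; with the continuity of $\phi_*^{reg}$ from (ii), $\phi_*$ is continuous at $h_*$.

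The main difficulty is making every constant uniform in $\ep$ at once. The uniform Lasota--Yorke bound in (i) is delicate precisely because forward orbits of critical points can collide --- this is why (I4) is assumed --- and in (ii)--(iii) the decay rate of the jumps, the threshold $K$, and above all the $L^1$-closedness of $\mathcal K$ must be controlled independently of $\ep$. That last point is what forces $\mathcal K$ to record the location and level-dependent size of every jump rather than just $\var{f^{sal}}$: an $L^1$-limit of saltus functions of bounded variation can acquire a nonzero, arbitrarily steep regular part, so without the decay built in the Lipschitz bound of (ii) would not survive passage to the limit.
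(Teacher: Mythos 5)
Your overall strategy parallels the paper's in (i) and in the final step of (iii), but you take a genuinely different route for establishing (ii) and the jump-decay estimate needed in (iii). Where the paper constructs $\phi_\ep$ as the $L^1$-limit of Cesàro averages $F_n = \frac1n\sum_{k<n}\L_\ep^k 1$ and then passes Lipschitz and jump bounds to the limit via a diagonalization over jumps plus Arzel\`a--Ascoli for the regular part, you package the same estimates into an $L^1$-closed convex set $\mathcal K$ that is preserved by the iterates $\L_\ep^k\mathbf 1$. This is a clean reformulation, and your observation that the $L^1$-closedness of $\mathcal K$ genuinely requires the uniform tightness coming from the jump decay (otherwise a limit of saltus parts could smear into an uncontrolled regular part) is exactly the subtlety the paper's diagonalization is built to handle.

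However, the specific form in which you encode the jump decay has a gap. You impose the per-point bound $\abs{s_u}\le C\lam^{-j(u)}$ and claim it is preserved under $\L_\ep$ via the transfer rule $\abs{s_u}\le\lam^{-1}\sum_{T_\ep v=u}\abs{s_v}$. This does not close: a point $u$ at level $j(u)$ may have several non-critical preimages $v$, each with $j(v)\ge j(u)-1$, so the inductive bound only gives $\abs{s_u}\le d\,C\lam^{-j(u)}$, with the constant inflated by the number of branches at each step. Relatedly, your assertion that ``at most $2(d+1)$ points share each level'' is not valid in general, since the forward orbit of a critical point may pass through other critical points (which the hypotheses do not forbid) and branch; so even if you had the per-point bound, summing it over a level would not give a convergent geometric series. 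The paper avoids both issues by tracking the level-sum quantity $\sum_{\{u:\#(u)>m\}}\abs{s_u}$ instead of per-point bounds: the recursion $\sum_{\#(u)>m}\abs{s_{u,n}}\le\lam^{-1}\sum_{\#(u)>m-1}\abs{s_{u,n-1}}$ closes cleanly because each non-critical preimage $v$ contributes to exactly one $u=Tv$, and the resulting bound $\sum_{\#(u)>m}\abs{s_u}\le\lam^{-m}\cly$ gives precisely the tightness you need for $L^1$-closedness and the estimate used in (iii). The fix to your argument is therefore straightforward --- define $\mathcal K$ using the level-sum inequality rather than a per-point inequality --- but as written the membership $\L_\ep^k\mathbf 1\in\mathcal K$ does not follow from your transfer rule, and the deduction of ``$\var{\phi_\ep^{sal}}$ is dominated by a geometric series'' is not justified.
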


The proof of Proposition~\ref{P:densities} is technical, and so we defer it until \S\ref{S:densitiesproof}.

\subsection{Proofs}\label{S:main_proof}

We recall that for any $C_1,C_2>0 $, $\{f\in BV:\abs{f}_{L^1}\leq C_1, \var{f}\leq C_2\}$ is pre-compact in $L^ 1 $. This fact will be used repeatedly in what follows.

\subsubsection*{Proof of Theorem~\ref{thm1}}

Using (i) of Proposition~\ref{P:densities}, we are able to choose a sequence of values $\ep'$ converging to $0$ such that  $\phi_{\ep'}$ converges in $L^1$ to some function, which we denote by $\phi_0 $.
Using the fact that $\phi_\ep$ is a fixed point of the Perron-Frobenius operator $\L_\ep$ associated to $T_\ep$ (see \S\ref{S:reviewLY} for the definition), one can verify that $\phi_0 $ is an invariant density for $T_0 $, and so there exists $\al $ such that $\phi_0=\al \phi_\lt+(1-\al)\phi_\rt$.  We will verify that necessarily $\al/(1-\al)=l.h.r. $.
From this it follows that there is exactly one limit point of $\phi_\ep$ as $\ep\rightarrow0$, and Theorem~\ref{thm1} follows.

Now for $\ep' > 0$, $\mu_{\ep'}(H_{\lt,\ep'})=\mu_{\ep'}(H_{\rt,\ep'})$, because $\phi_{\ep'}=d\mu_{\ep'}/dx$ is an invariant density for $T_{\ep'} $.  We will show that as $\ep'\rightarrow 0$,
\begin{eqnarray}
  \mu_{\ep'}(H_{\lt,\ep'}) &=& \al\mu_\lt(H_{\lt,\ep'})+o(1)\cdot\mu_\lt(H_{\lt,\ep'}),\label{eq:Hltmeasure} \\
  \mu_{\ep'}(H_{\rt,\ep'}) &=& (1-\al)\mu_\rt(H_{\rt,\ep'})+o(1)\cdot\mu_\rt(H_{\rt,\ep'}),\label{eq:Hrtmeasure}
\end{eqnarray}
from which the equation $\al/(1-\al)=l.h.r. $ and hence Theorem~\ref{thm1}  follows immediately.

We prove only Equation~\eqref{eq:Hltmeasure}, since the proof of Equation~\eqref{eq:Hrtmeasure} is analogous.  Write
\[
\begin {split}
    \mu_{\ep'}(H_{\lt,\ep'})&=\int_{H_{\lt,\ep'}}\phi_{\ep'}\,dx
    =\al\int_{H_{\lt,\ep'}}\phi_{\lt}\,dx+\int_{H_{\lt,\ep'}}(\phi_{\ep'}-\al\phi_\lt)\,dx
    \\
    &=\al\mu_\lt(H_{\lt,\ep'})+
    O\left(\sup_{x\in H_{\lt,\ep'}}\abs{\phi_{\ep'}(x)-\al\phi_\lt (x)}\right)\cdot\text{Leb}(H_{\lt,\ep'}) .
\end {split}
\]
But as $\ep'\rightarrow 0 $, $H_{\lt,\ep'}\rightarrow h_\lt $ in the Hausdorff metric, and then $\mu_\lt(H_{\lt,\ep'})/\text{Leb}(H_{\lt,\ep'})\rightarrow \phi_\lt(h_\lt)>0$, because $\phi_\lt$ is continuous at $h_\lt$.  Thus our proof is completed by the following:
\begin{lemma}
As $\ep'\rightarrow 0 $,
\[
    \sup_{x\in H_{\lt,\ep'}}\abs{\phi_{\ep'}(x)-\al\phi_\lt (x)} \rightarrow 0.
\]

\end{lemma}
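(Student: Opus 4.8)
The plan is to control $\phi_{\ep'} - \al\phi_\lt$ uniformly on the shrinking set $H_{\lt,\ep'}$ by splitting each function into its regular and saltus parts and estimating them separately, using Proposition~\ref{P:densities}. Write $\phi_{\ep'} - \al\phi_\lt = (\phi_{\ep'}^{reg} - \al\phi_\lt^{reg}) + (\phi_{\ep'}^{sal} - \al\phi_\lt^{sal})$. Since $H_{\lt,\ep'} \to h_\lt$ in the Hausdorff metric, it suffices to show that the oscillation of $\phi_{\ep'} - \al\phi_\lt$ over a small neighborhood $[h_\lt - \delta, h_\lt + \delta]$ of $h_\lt$ is small (uniformly in $\ep'$ for $\ep'$ small), together with a pointwise estimate at a single point near $h_\lt$ coming from $L^1$ convergence.

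For the regular parts: by (ii) of Proposition~\ref{P:densities}, $\phi_{\ep'}^{reg}$ is Lipschitz with a constant uniform in $\ep'$, and $\phi_\lt^{reg}$ is Lipschitz, so $\phi_{\ep'}^{reg} - \al\phi_\lt^{reg}$ has a uniform Lipschitz constant $L$; hence its oscillation over $[h_\lt - \delta, h_\lt + \delta]$ is at most $2L\delta$, which can be made as small as desired by choosing $\delta$ small. For the saltus parts: by (iii) of Proposition~\ref{P:densities}, given $\eta > 0$ we may pick $\delta$ so that $\varsub{[h_\lt-\delta,h_\lt+\delta]}{\phi_{\ep'}^{sal}} < \eta$ for all $\ep' < \ep_0$, and since $\phi_\lt$ is continuous at $h_\lt$ we may shrink $\delta$ further so that $\phi_\lt^{sal}$ (equivalently $\phi_\lt$, since $\phi_\lt^{reg}$ is continuous) has variation less than $\eta$ there as well. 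Thus the oscillation of $\phi_{\ep'}^{sal} - \al\phi_\lt^{sal}$ over this neighborhood is at most $2\eta$. Combining, the total oscillation of $\phi_{\ep'} - \al\phi_\lt$ over $[h_\lt - \delta, h_\lt + \delta]$ is at most $2L\delta + 2\eta$, which is small.

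To convert the oscillation bound into a genuine sup bound on $H_{\lt,\ep'}$, I still need one anchoring value: I claim $\phi_{\ep'}(x_0) - \al\phi_\lt(x_0) \to 0$ for some sequence of points $x_0 = x_0(\ep')$ in the fixed neighborhood $[h_\lt - \delta, h_\lt + \delta]$. This follows from $L^1$ convergence $\phi_{\ep'} \to \phi_0 = \al\phi_\lt + (1-\al)\phi_\rt$ on the sequence $\ep'$, plus the fact that $\phi_\rt$ vanishes on the interior of $I_\lt$ (indeed $\phi_\rt$ is supported on $I_\rt$), so that $\phi_0 = \al\phi_\lt$ a.e.\ on $I_\lt$; combined with the uniform variation bound, $L^1$ convergence forces pointwise closeness except on a small set, so we can select $x_0$ near $h_\lt$ with $|\phi_{\ep'}(x_0) - \al\phi_\lt(x_0)|$ small for $\ep'$ small. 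Then for any $x \in H_{\lt,\ep'} \subset [h_\lt - \delta, h_\lt + \delta]$ (true once $\ep'$ is small), $|\phi_{\ep'}(x) - \al\phi_\lt(x)| \le |\phi_{\ep'}(x_0) - \al\phi_\lt(x_0)| + \varsub{[h_\lt-\delta,h_\lt+\delta]}{\phi_{\ep'} - \al\phi_\lt}$, which is $\le o(1) + 2L\delta + 2\eta$. Taking $\ep' \to 0$ then $\delta, \eta \to 0$ gives the result.

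The main obstacle is the anchoring step: $L^1$ convergence alone does not give pointwise control, and the saltus parts could in principle concentrate a jump right at $h_\lt$. The uniform variation bound from (i) rules out pathological behavior — a sequence of $BV$ functions with uniformly bounded variation converging in $L^1$ has, along the regular representatives, pointwise convergence at every continuity point of the limit — and the crucial input is precisely (iii), which says the $\phi_{\ep'}^{sal}$ do not develop jumps accumulating at $h_\lt$. I would organize the argument so that (iii) is invoked to handle both the uniform oscillation of the saltus parts near $h_\lt$ and, indirectly, the validity of the anchoring, while (ii) handles the regular parts and (i) provides the compactness already used upstream.
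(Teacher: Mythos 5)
Your proof is correct and is essentially the paper's argument in direct form rather than by contradiction: both use Proposition~\ref{P:densities}(ii) for a uniform Lipschitz bound on the regular parts, Proposition~\ref{P:densities}(iii) together with continuity of $\phi_\lt$ at $h_\lt$ for small saltus variation near $h_\lt$, and the $L^1$ convergence of $\phi_{\ep'}-\al\phi_\lt$ on $I_\lt$ as the anchoring input. One caution on your closing side-remark: it is not true in general that $BV$ functions with uniformly bounded variation converging in $L^1$ converge pointwise at every continuity point of the limit (e.g.\ $1_{[1/2-1/n,\,1/2+1/n]}\to 0$ in $L^1$ but not at $x=1/2$), though your actual anchoring step --- selecting $x_0$ by an $L^1$-averaging argument on a fixed small interval --- is correct and does not rely on that claim.
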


Although this uniform convergence might at first seem surprising, Proposition~\ref{P:densities} (ii) and (iii) essentially say that near $h_\lt $, $\{\phi_{\ep'}\} $ behaves like a family of equicontinuous functions.

\begin{proof}
We proceed by contradiction.  Suppose that there exists $C>0 $ and a subsequence $\ep''\rightarrow 0$  of the $\ep'$ values such as that for each $\ep'' $, there is a point $x_{\ep''} \in H_{\lt,\ep''}$ with $\abs{\phi_{\ep''}(x_{\ep''})-\al\phi_\lt (x_{\ep''})}>C$.  Necessarily, $x_{\ep''}\rightarrow h_\lt $ as $\ep''\rightarrow 0$.

 We restrict all functions of interest to the left subinterval $I_\lt $.  Set $\ga_{\ep''} := \phi_{\ep''}^{reg} -\al\phi_\lt^{reg}$ and $\omega_{\ep''} := \phi_{\ep''}^{sal} -\al\phi_\lt^{sal}$, so that $\phi_{\ep''}-\al\phi_\lt= \ga_{\ep''}+\omega_{\ep''}$.  Using (ii) of Proposition~\ref{P:densities}, let $L$ be such that for all sufficiently small $\ep'' $, $\lip{\ga_{\ep''}}<L $.  Next, we use (iii) with $\eta=C/5$ and make sure to choose the corresponding $\del<C/ (5L)$ small enough so that $\varsub{[h_\lt-\del,h_\lt+\del]}{\al\phi_{\lt}^{sal}}<C/5$ as well.  Thus $\varsub{[h_\lt-\del,h_\lt+\del]}{\omega_{\ep''}}<2C/5$.  Then if $x\in [h_\lt-\del,h_\lt+\del] $, and $\ep'' $ is sufficiently small, $x_{\ep''}\in [h_\lt-\del,h_\lt+\del] $ and
\begin{align*}
    &\abs{ \ga_{\ep''}(x)+\omega_{\ep''}(x) }\\
    &\geq  \abs{\ga_{\ep''}(x_{\ep''})+\omega_{\ep''}(x_{\ep''})}
    -\abs{\ga_{\ep''}(x)+\omega_{\ep''}(x)-\ga_{\ep''}(x_{\ep''})-\omega_{\ep''}(x_{\ep''})}
    \\
    &\geq  C - [L\cdot 2\del +2C/5]
    \geq C/5.
\end {align*}
But this contradicts that $\ga_{\ep''}+\omega_{\ep''}=\phi_{\ep''}-\al\phi_\lt\overset{L^1}{\longrightarrow}0$.
\end{proof}

\subsubsection*{Proof of Theorem~\ref{thm2}}

First, we observe that the results of \cite{KellerLiverani99}
guarantee that for small $\ep >0 $,  $\rho_\ep<1 $ is a simple eigenvalue of multiplicity 1.  Hence there are exactly two real-valued eigenfunctions, $\pm\psi_\ep$, satisfying $\L_\ep\psi_\ep=\rho_\ep\psi_\ep$ and $\abs{\psi_\ep}_{L^1}=1$.  But for such functions,
$
  \int \psi_\ep\, dx= \int \L_{\ep} \psi_\ep \,dx= \rho_\ep \int \psi_\ep \, dx,
$
so $ \int \psi_\ep \, dx= 0$.  We have the following uniform bound on their variations, whose proof we defer until \S\ref{S:densitiesproof}.
\begin{lemma}[\textbf{Uniform bound on the variations of the $\psi_\ep$}]\label{L_psibound}
There exists $\ep_1>0 $ such that
$
        \sup_{0<\ep<\ep_1} \var{\psi_\ep}<+\infty.
$
\end{lemma}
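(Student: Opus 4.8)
The plan is to derive the bound from the spectral stability results of Keller--Liverani \cite{KellerLiverani99}, which have already been invoked to set up Theorem~\ref{thm2}, together with the uniform Lasota--Yorke estimates established in the course of proving Proposition~\ref{P:densities}. The essential input is that for small $\ep>0$, $\rho_\ep$ is a simple eigenvalue of $\L_\ep$ separated from the rest of $\sig(\L_\ep)$ by a definite gap, and that the associated spectral projection $\Pi_\ep$ converges to the corresponding projection $\Pi_0$ for $\L_0$ in an appropriate operator norm (the "triple norm" comparing the $BV$ norm to the $L^1$ norm). Since $\psi_\ep$ lies in the range of $\Pi_\ep$, one gets $\var{\psi_\ep} \leq \|\Pi_\ep\| \cdot (\text{something})$, so the uniform bound will follow once we control $\Pi_\ep$ uniformly and control the $L^1$-side quantity, which is pinned down by the normalization $\abs{\psi_\ep}_{L^1}=1$.

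The steps, in order. First, recall from \S\ref{S:densitiesproof} (the same estimates that give Proposition~\ref{P:densities}(i)) that the family $\{\L_\ep\}$ satisfies uniform Lasota--Yorke inequalities: there are constants $\cly$, $0<\theta<1$, and $\ep_1>0$ with $\var{\L_\ep^n f} \leq \cly\,\theta^n \var{f} + \cly\,\abs{f}_{L^1}$ for all $0<\ep<\ep_1$, $n\geq 1$, $f\in BV$. Second, invoke \cite{KellerLiverani99}: under these uniform bounds together with the closeness $\|\L_\ep - \L_0\|_{BV\to L^1}\to 0$, the spectral projections $\Pi_\ep$ onto $\sig(\L_\ep)\cap B_\del(1)$ converge to $\Pi_0$, and in particular $\sup_{0<\ep<\ep_1}\|\Pi_\ep\|_{BV\to BV}<\infty$ after possibly shrinking $\ep_1$. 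Third, decompose $\Pi_\ep = \Pi_\ep^{(1)} + \Pi_\ep^{(\rho)}$ into the rank-one pieces for the eigenvalues $1$ and $\rho_\ep$; since $\Pi_\ep^{(1)}$ is the projection $f\mapsto (\int f\,dx)\,\phi_\ep$, it too is uniformly bounded using Proposition~\ref{P:densities}(i), hence so is $\Pi_\ep^{(\rho)}$. Fourth, since $\Pi_\ep^{(\rho)}\psi_\ep = \psi_\ep$, write $\psi_\ep = \Pi_\ep^{(\rho)} \psi_\ep$ and estimate: $\var{\psi_\ep} \leq \|\Pi_\ep^{(\rho)}\|_{BV\to BV}\,\var{\psi_\ep}$ is circular, so instead use that $\Pi_\ep^{(\rho)}$ has rank one and therefore factors as $\Pi_\ep^{(\rho)} f = \nu_\ep(f)\,\psi_\ep$ for some functional $\nu_\ep$ with $\nu_\ep(\psi_\ep)=1$; applying this with the $L^1$-bounded projection and using $\abs{\psi_\ep}_{L^1}=1$ lets one bound $\var{\psi_\ep}$ by $\|\Pi_\ep^{(\rho)}\|_{L^1\to BV}$-type quantities that are controlled by the Lasota--Yorke constants. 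Concretely, since $\rho_\ep$ is bounded away from $0$, iterating once gives $\var{\psi_\ep} = \rho_\ep^{-n}\var{\L_\ep^n \psi_\ep} \leq \rho_\ep^{-n}(\cly\theta^n\var{\psi_\ep} + \cly\abs{\psi_\ep}_{L^1})$; choosing $n$ large enough (uniformly in $\ep$, using $\rho_\ep\to 1$) that $\cly\theta^n\rho_\ep^{-n}\leq 1/2$ yields $\var{\psi_\ep}\leq 2\cly\rho_\ep^{-n}$, a uniform bound. This last calculation is really the cleanest route and sidesteps the projection machinery almost entirely — one only needs $\rho_\ep$ bounded away from $0$ and $1$ in the sense that $\liminf \rho_\ep > 0$ and the uniform Lasota--Yorke inequality.

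The main obstacle is ensuring the spectral picture is genuinely uniform: one needs $n$ chosen so that $\cly\,\theta^n\,\rho_\ep^{-n} \leq 1/2$ simultaneously for all small $\ep$, which requires $\rho_\ep$ bounded below away from $0$ (clear, since $\rho_\ep\to 1$) and, more subtly, that there is no subtlety with $\rho_\ep$ depending on $\ep$ in the exponent — this is handled by first fixing $n$ with $\cly\,\theta^n \leq 1/4$, then noting $\rho_\ep^{-n}\to 1$ so $\cly\,\theta^n\,\rho_\ep^{-n}\leq 1/2$ for $\ep$ small. A secondary point to verify is that the uniform Lasota--Yorke inequality from \S\ref{S:densitiesproof} is stated in a form valid for all $f\in BV$ (not just for the fixed points $\phi_\ep$); this is exactly what condition (I4) was imposed to guarantee, so it should be available verbatim. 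Everything else — the existence and simplicity of $\rho_\ep$, the normalization, $\int\psi_\ep\,dx = 0$ — has already been recorded in the text preceding the lemma.

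\begin{proof}[Proof sketch of Lemma~\ref{L_psibound}]
By the estimates of \S\ref{S:densitiesproof} there are constants $\cly>0$, $0<\theta<1$ and $\ep_1>0$ such that the uniform Lasota--Yorke inequality
\[
\var{\L_\ep^n f}\leq \cly\,\theta^n\var{f}+\cly\,\abs{f}_{L^1}
\]
holds for all $0<\ep<\ep_1$, all $n\geq 1$ and all $f\in BV$. Fix $n$ with $\cly\,\theta^n\leq 1/4$. Since $\rho_\ep\rightarrow 1$ as $\ep\rightarrow 0$, after shrinking $\ep_1$ we may assume $\rho_\ep^{-n}\leq 2$ for $0<\ep<\ep_1$, so that $\cly\,\theta^n\rho_\ep^{-n}\leq 1/2$. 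Applying the inequality to $f=\psi_\ep$ and using $\L_\ep^n\psi_\ep=\rho_\ep^n\psi_\ep$ together with $\abs{\psi_\ep}_{L^1}=1$,
\[
\var{\psi_\ep}=\rho_\ep^{-n}\var{\L_\ep^n\psi_\ep}\leq \rho_\ep^{-n}\bigl(\cly\,\theta^n\var{\psi_\ep}+\cly\bigr)\leq \tfrac12\var{\psi_\ep}+2\cly.
\]
Hence $\var{\psi_\ep}\leq 4\cly$ for all $0<\ep<\ep_1$, which is the desired uniform bound.
\end{proof}
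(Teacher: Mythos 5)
Your proof is correct and takes essentially the same route as the paper: apply the uniform Lasota--Yorke inequality to the eigenfunction $\psi_\ep$, use $\L_\ep^n\psi_\ep=\rho_\ep^n\psi_\ep$ together with $\abs{\psi_\ep}_{L^1}=1$ and $\rho_\ep\to 1$, and absorb the variation term by choosing $n$ so that the contraction beats $\rho_\ep^{-n}$. (The spectral-projection discussion you include at the start is correctly recognized by you as unnecessary; the paper likewise goes directly via the Lasota--Yorke bootstrap, with the only difference being cosmetic choices of the constants $\theta^n$ versus $\beta^n$ and $\rho_\ep^n>3/4$ versus $\rho_\ep^{-n}\leq 2$.)
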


Let $\psi_0$ be any limit point in $L^1$ of $\psi_\ep$ as $\ep\rightarrow 0$.  Then, since $\rho_\ep\rightarrow 1$, it follows that $\psi_0$ is invariant under $\L_0$, and is thus a linear combination of $\phi_\lt$ and $\phi_\rt$.  Since $\abs{\psi_0}_{L^1}=1$ and $ \int \psi_0 \, dx= 0$, necessarily $\psi_0=\pm\frac{1}{2}\phi_\lt \mp\frac{1}{2}\phi_\rt.$  Hence we can uniquely specify $\psi_\ep $ by the condition $\int_{I_\lt} \psi_\ep \,dx>0 $, and Theorem~\ref{thm2} follows.

\section{Proofs of the properties of the densities}

In order to prepare for the proofs of Proposition~\ref{P:densities} and Lemma~\ref{L_psibound}, it will be convenient to first show how to derive such properties for an invariant density of a single, fixed piecewise expanding map.  We do this in \S\ref{S:reviewLY}. Then, in \S\ref{S:densitiesproof}, we prove Proposition~\ref{P:densities} and Lemma~\ref{L_psibound} by showing how such estimates can be made uniformly for the family of maps $T_\ep $, $\ep\geq 0$.

Before beginning, we remark that if $f\in BV $, then for each $x$, $\abs{f}_\infty\leq  \abs{f(x)}+\var{\abs{f}}\leq  \abs{f(x)}+\var{f}$.  Integrating, we find that $\abs{f}_\infty\leq \abs{f}_{L^ 1}+\var{f}$.  We will use this fact repeatedly below.

\subsection{Properties of an invariant density for a single piecewise expanding map} \label{S:reviewLY}

 Let $T:I\circlearrowleft$ be a piecewise $C^2$ uniformly expanding map, with  $\C=\{ 0=c_0 < c_1<  \cdots < c_d=1\}$ as a critical set.    Let $\L$ be the associated Perron-Frobenius operator, i.e., the transfer operator acting on densities.  We begin by briefly reviewing a method for finding an invariant density of $T$. Such a method was introduced in~\cite{LasotaYorke}; see Chapter 3 in \cite{BaladiBook} for a more modern exposition.  Let $\lam_T = \inf_{x\in I\setminus \C}\abs{T'(x)}>1$ be the minimum expansion and $D_T = \sup_{x\notin\C}\abs{T''(x)}/\abs{T'(x)}$ be the distortion of $T$. Then if $f\in BV $, $x\notin T\C$,
 \begin{equation}\label{E:transfer}
    \L f(x)=\sum_{i=1}^d f(\xi_i(x))\abs{\xi_i'(x)}1_{J_i}(x),
 \end{equation}
 where $J_i=T\vert_{[c_{i-1},c_i]}([c_{i-1},c_i])$ and $\xi_i=(T\vert_{[c_{i-1},c_i]})^{-1}:J_i\rightarrow [c_{i-1},c_i]$.  One can show that there exists constants $\beta\in (0,1) $  and $\cly $ such that for each $n\geq 1$ and $f\in BV $, the following Lasota-Yorke inequality holds:
 \begin{equation}\label{E:LYinequality}
    \var{\L^n f}\leq \cly\beta^{n}\var{f}+\cly\abs{f}_{L^ 1}.
 \end{equation}
In fact, $\beta $ can be chosen as any number greater than $\lam_T^{-1}$, although we will not use this fact.  Set $F_n=\frac{1}{n}\sum_{k=0}^{n-1}\L ^k 1$.  Then $F_n\overset{L^1}{\longrightarrow}\phi$, where $\phi\in BV$ is the density of an \acim~for $T$.  Using Helly's Theorem,  one has that $ \var {\phi}\leq\cly $.

We wish to characterize the properties of the regular and singular terms in the decomposition $\phi=\phi^{reg} +\phi^{sal} $.  First, let us define a hierarchy on the set of points in the postcritical orbits $\mc{S}=\cup_{k\geq 1}T^k\C $ by $\#(u):=\inf\{k\geq 1:u\in T^k\C\}$.
The following characterization is motivated by the discussion of the invariant densities for unimodal expanding maps found in~\cite{Baladi} and \cite{BaladiSmania}.  In particular, in~\cite[\S3.3]{BaladiSmania} a norm is introduced on the sequence of jumps of $\phi$ along the postcritical orbit with weights that grow exponentially in $\#(u) $.
 \begin{lemma}\label{L:expdecay}
 Given the hypotheses above,
\begin{itemize}
  \item[(a)]
$\phi^{reg} $ is Lipschitz continuous.  Furthermore, there exists a constant $\cdis =\cdis (\lam_T,D_T)$ such that $\lip{\phi^{reg}} \leq \cdis (1+\cly)$.  $\cdis (\lam_T,D_T)$ can be defined so that it depends continuously on $\lam_T>1 $, $D_T\geq 0 $.
\item[(b)]
  The discontinuity set of $\phi$ is a subset of $\mc{S}=\cup_{k\geq 1}T^k\C $.  If we write $\phi^{sal} =\sum_{u\in\mc{S}} s_u H_{u} $, then
  for each $m\geq 0$,
    $\sum_{\{u\in \mc{S}:\# (u) > m\}}\, \abs{s_u} \leq \lam_T^{-m}\, \cly$.
\end{itemize}
 \end{lemma}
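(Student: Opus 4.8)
The plan is to analyze the transfer operator $\L$ directly via its explicit form~\eqref{E:transfer} and track how jumps of a $BV$ function propagate under $\L$. The central mechanism is this: when we apply $\L$ to a function $f$, the new function $\L f$ can be discontinuous either because of a discontinuity of $f$ at some preimage point $\xi_i(x)$ — which contributes a jump of magnitude $|s_{f,\xi_i(x)}| \cdot |\xi_i'(x)| \le \lam_T^{-1} |s_{f,\xi_i(x)}|$, since $|\xi_i'| = 1/|T'\circ\xi_i| \le \lam_T^{-1}$ — or because of a ``new'' discontinuity created at a point of $T\C$, where the jump is controlled by the boundary values of $f\cdot|\xi_i'|$ on the relevant branch, hence by $|f|_\infty$ and thus by $|f|_{L^1} + \var{f}$. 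The key quantitative statement I would isolate as a sublemma: for any $f \in BV$, writing $\L f = (\L f)^{reg} + (\L f)^{sal}$ with $(\L f)^{sal} = \sum_{u} s_{\L f, u} H_u$, we have, for every $m \ge 0$,
\[
\sum_{\{u : \#(u) > m+1 \text{ w.r.t. } T\}} |s_{\L f, u}| \;\le\; \lam_T^{-1} \sum_{\{v : \#(v) > m \text{ w.r.t. } T\}} |s_{f, v}|,
\]
roughly speaking: a discontinuity of $\L f$ at a point $u$ with $\#(u) > m+1$ must come from a discontinuity of $f$ at a preimage $v = \xi_i(u)$, and that preimage satisfies $\#(v) > m$ (since $T(v) = u$ lies deep in the postcritical orbit, $v$ must too), and the jump gets multiplied by at most $\lam_T^{-1}$. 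New jumps created at $T\C$ have $\#(u) = 1$, so they do not contribute once $m \ge 1$; some bookkeeping is needed for the shift in the hierarchy and for jumps of $f$ at points of $\C$ itself.

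With that sublemma in hand, part (b) follows by applying it iteratively to $F_n = \frac1n \sum_{k=0}^{n-1} \L^k 1$. The function $1$ has no discontinuities, so $\L^k 1$ has discontinuities only in $\bigcup_{j=1}^k T^j\C \subset \mc S$, giving the first assertion of (b) in the limit. For the tail bound: fix $m$; for $k > m$, iterating the sublemma shows $\sum_{\#(u) > m}|s_{\L^k 1, u}| \le \lam_T^{-m} \sum_{\#(u) > 0}|s_{\L^{k-m} 1, u}| = \lam_T^{-m}\var{(\L^{k-m}1)^{sal}} \le \lam_T^{-m}\cly$, using $\var{\L^j 1} \le \cly$ (from the Lasota-Yorke inequality with $f=1$, $|1|_{L^1}=1$, together with $\var{g^{sal}} \le \var g$) and the fact that $\#(u)>0$ is automatic for any $u$ in a postcritical orbit. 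Averaging over $k$ (the terms $k \le m$ are harmless since a fixed finite sum is bounded and we can absorb or note they vanish once we sum only over $\#(u)>m$, because $\L^k 1$ for $k \le m$ has all discontinuities with $\#(u) \le k \le m$) gives $\sum_{\#(u)>m}|s_{F_n, u}| \le \lam_T^{-m}\cly$. Then Helly's theorem / lower semicontinuity of the $\ell^1$-norm of jumps under $L^1$ convergence transfers the bound to $\phi$.

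For part (a), I would show that $\phi^{reg}$ is Lipschitz by estimating the derivative of $\L f$ on each branch away from discontinuities: $\frac{d}{dx}\bigl(f(\xi_i(x))|\xi_i'(x)|\bigr) = f'(\xi_i(x))|\xi_i'(x)|^2 \pm f(\xi_i(x))|\xi_i'(x)| \frac{T''(\xi_i(x))}{T'(\xi_i(x))}$, so the ``regular slope'' of $\L f$ is bounded by $\lam_T^{-2}\,\mathrm{ess\,sup}|f'| + D_T\,\lam_T^{-1}|f|_\infty$ summed suitably over overlapping branches — but since the branches partition $I$, at each $x$ only finitely many (in fact the count is controlled independent of $x$) terms are nonzero, and one gets a contraction in the Lipschitz seminorm of the regular part plus a term linear in $|f|_\infty \le |f|_{L^1} + \var f$. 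Iterating this on $F_n$ and passing to the limit yields $\lip{\phi^{reg}} \le \cdis(1 + \cly)$ for an explicit $\cdis$ built from $\lam_T$ and $D_T$ (e.g. $\cdis \sim D_T/(1 - \lam_T^{-1})$ up to the combinatorial branch-overlap constant), and the continuity of $\cdis$ in $(\lam_T, D_T)$ on $\lam_T > 1$, $D_T \ge 0$ is then manifest.

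The main obstacle I anticipate is the careful bookkeeping in the sublemma: correctly matching the hierarchy levels $\#(u)$ under the branch inverses (a point $u$ with $\#(u) = k$ has $T$-preimages that are either critical points, contributing level-$1$ discontinuities to $\L f$, or points of level $k-1$ or lower in the postcritical orbit), handling the fact that $T$ is bi-valued at $\C$ so discontinuities of $f$ at critical points need separate treatment, and dealing with the boundary convention $f^{sal}(1) = 0$ and points mapping to $0$ or $1$. One must also be careful that the ``new'' jumps at $T\C$ genuinely have $\#(u) = 1$ and so drop out of the tail sums for $m \ge 1$, and that the overlap multiplicity of the branches $J_i$ at any given point $x$ is bounded (it is, by at most the number of branches, but a cleaner bound — e.g. that at most finitely many branches contain $x$ — suffices). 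None of this is deep, but it is the part where an incorrect index shift would break the exponential-decay conclusion, so I would write it out with care.
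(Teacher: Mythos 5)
Your overall strategy matches the paper's. For part~(b), your sublemma tracking the propagation of jumps under one application of $\L$ (with the observation that a discontinuity of $\L f$ at $u$ with $\#(u)>1$ must come from preimages $v$ with $\#(v)\ge\#(u)-1$ and gets contracted by $|\xi_i'|\le\lam_T^{-1}$, while newly created jumps sit at $T\C$ and hence have $\#(u)=1$) is exactly the paper's Sublemma, applied to the recursion $F_n=\frac{n-1}{n}\L F_{n-1}+\frac1n$ rather than to $\L^k 1$ followed by averaging --- a cosmetic difference. The hierarchy bookkeeping you worry about is indeed the crux, and you have the right picture.

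Two steps deserve more care than your sketch gives them. First, the limiting step in part~(b): appealing to ``lower semicontinuity of the $\ell^1$-norm of jumps under $L^1$ convergence'' is not quite right as stated, since $L^1$ convergence does not control jumps at a given location (a steep linear ramp converges in $L^1$ to a step with no discontinuity in the prelimit). The paper instead diagonalizes to get pointwise convergence of the individual jumps $s_{u,n_j}\to\hat s_u$ for each $u\in\mc S$, then uses part~(a) and Arzel\`a--Ascoli to show $F_{n_j}^{reg}$ converges uniformly to a continuous limit, and finally invokes the \emph{uniqueness} of the regular-plus-saltus decomposition to conclude $\phi^{sal}=\sum\hat s_u H_u$ and $\phi^{reg}=\lim F_{n_j}^{reg}$. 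Parts~(a) and~(b) are genuinely intertwined here, and your argument as written leaves this step open.

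Second, part~(a) itself: you propose a one-step estimate of the form $\lip{(\L f)^{reg}}\le A\,\lip{f^{reg}}+B\,|f|_\infty$ and then ``iterate the contraction''. But the one-step factor $A$ is controlled by a quantity like $\sup_x\sum_i|\xi_i'(x)|^2\le\lam_T^{-1}\,|\L 1|_\infty$, which need not be less than one unless $T$ preserves Lebesgue measure or has strong Markov structure; so the iteration may not close. The paper sidesteps this by proving, in one shot for each $n$, that $\lip{(\L^n 1)^{reg}}\le\cdis\,|\L^n 1|_\infty$, using the uniform bounded distortion of $T^n$ (a standard $n$-step distortion estimate in which the constant $\cdis$ is independent of $n$), and then bounding $|\L^n 1|_\infty\le 1+\var{\L^n 1}\le 1+\cly$. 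If you want the iterative version to work you would need either a stronger hypothesis or to incorporate the distortion telescoping into the one-step estimate, which effectively reproduces the direct $n$-step argument.
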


 \begin{proof}

  We begin by noting from Equation~\eqref{E:transfer} that $F_n $ is smooth except possibly at points in $\cup_1^{n-1} T^k\C$.   Write $F_{n}^{sal}=\sum_{u\in\mc{S}} s_{u,n}H_u$.  Then we can show that $\abs{s_{u,n}}$ decays uniformly exponentially fast in $\#(u)$, i.e.
  \begin{sublemma}\label{L:expdecay0}
 For each $m,n\geq 0$,
    $\sum_{\{u\in \mc{S}:\,\# (u) > m\}}\, \abs{s_{u,n}} \leq \lam_T^{-m}\,\cly.$
 \end{sublemma}

 \begin{proof}[Proof of the Sublemma]
 If $m\geq n$, $\sum_{\# (u) > m}\abs{s_{u,n}}= 0 $, and $\sum_{\# (u) > 0}\abs{s_{u,n}}= \var{F_n^{sal}}\leq \var{F_n}\leq\cly$.  Since $F_n =\frac{n-1}{n} \L F_{n-1} +\frac{1}{n}$, if $\#(u)>1 $ we see from Equation~\eqref{E:transfer} with $f=F_{n-1}$ that
 $\abs{s_{u,n}}\leq \frac{n-1}{n}\lam_T^{-1}\sum_{\{v\in \mc{S}:\, Tv=u\}} \, \abs{s_{v,n-1}} $.  Thus if $0<m < n $,
 \[
    \begin{split}
    \sum_{\# (u) > m}\abs{s_{u,n}}
    &\leq
    \sum_{\# (u) > m} \frac{n-1}{n}\lam_T^{-1}\sum_{\{v\in \mc{S}:\, Tv=u\}} \, \abs{s_{v,n-1}}
    \\ \leq
    &\frac{n-1}{n}\lam_T^{-1}\sum_{\# (u) > m-1}\abs{s_{u,n-1}}
    \leq \cdots \\
    \leq
    &\frac{n-m}{n}\lam_T^{-m}\sum_{\# (u) > 0}\abs{s_{u,n-m}}
    \leq
    \lam_T^{-m}\cly.
    \end{split}
 \]
In the inequalities above, we use the fact that if $\# (u) >1$, then $T^{-1}(u)$ does not contain any critical points.
\end{proof}

Using a diagonalization argument, we may find a subsequence $n_j$ such that for each $u$, $s_{u,n_j}$ converges as $n_j\rightarrow\infty$ to some number, which we write as $\hat{s}_u$.  In particular, for each $m$, $\sum_{\# (u) > m}\abs{\hat{s}_u}\leq\lam_T^{-m}\cly$, and $F_{n_j}^{sal} \overset{L^1}{\longrightarrow} F^{sal}$, where we define $F^{sal}=\sum_{u\in\mc{S}} \hat{s}_u H_u$.
Furthermore, a standard distortion estimate (see for example the proof of Proposition~3.3 in~\cite{Baladi}) shows that there exists a constant $\cdis $ such that for each $n\geq 0$, $\lip{(\L ^n 1)^{reg}}\leq \cdis\abs{\L ^n 1}_\infty \leq \cdis (1+\var{\L ^n 1})$.  Here, $\cdis $ depends only on the minimum expansion and on the distortion of $T $.  In particular,  $\sup_{n\geq 1}\lip{F_n^{reg}}\leq \cdis (1+\cly)$.   By the Arzel\`{a}-Ascoli Theorem, we may find a continuous function $F^{reg} $ such that some subsequence of $\{ F_{n_j}^{reg}\}$ converges in $L^{\infty}$ to $F^{reg}$.

 By the uniqueness of the decomposition $\phi=\phi^{reg} + \phi^{sal} $, we conclude that $\phi^{reg}=F^{reg} $ and $\phi^{sal} =F ^{sal} $.  Lemma~\ref{L:expdecay} follows.

\end{proof}

\subsection{Proofs of Proposition~\ref{P:densities} and Lemma~\ref{L_psibound}}\label{S:densitiesproof}

We prove only the claims about $\phi_\ep $ for $\ep>0 $, and leave the claims about $\phi_\lt, \phi_\rt $ to the reader.

Let $\L_\ep$ be the Perron-Frobenius operator \eqref{E:transfer} associated to $T_\ep$. The first key step is to prove that the $\L_\ep $ with $\ep $ sufficiently small satisfy Lasota-Yorke inequalities with uniform constants.  Let $\lam_\ep$ and $D_\ep $ be the minimum expansion and distortion of $T_\ep $, respectively.  Then as $\ep\rightarrow 0$, $\lam_\ep\rightarrow \lam_0$ and $D_\ep\rightarrow D_0 $.  Furthermore, $T_\ep $ is a piecewise $C^2$ uniformly expanding map that is a small $C^2$ perturbation of $T_0$,
and the two critical sets $\C_\ep$, $\C_0 $ are $\ep-$close together.  This is not sufficient to guarantee uniform Lasota-Yorke inequalities, see for example \cite[\S 6]{Keller82} or \cite{Blank}. However, such uniform  inequalities do follow with the additional assumption (I4), which guarantees that either (a) we have $\lam_0>2 $ or (b) $T_0$ has no periodic critical points, except possibly the points in $\partial I $ as fixed points.  We assume the former case in our presentation here, and comment on the latter case at the end of this section.

Fix $\lam\in(2,\lam_0) $.  The original proof from \cite{LasotaYorke} shows that if $f\in BV $ is real-valued,
 \begin{equation*}
    \var{\L_\ep f} \leq  (2\lam_\ep^{-1})\var{f}+C_\ep\abs{f}_{L^ 1},
 \end{equation*}
where
 \begin{equation}\label{E:LYconst}
    C_\ep =  D_\ep/\lam_\ep+2 \max_i\abs{c_{i+1,\ep}-c_{i,\ep}}^{-1}.
 \end{equation}
(Compare also~\cite{Liverani}[\S2].)   Iterating, we find that for sufficiently small $\ep$, for all such $f$ and $n\geq 1 $,
 \begin{equation}\label{E:unifLYinequality2}
    \var{\L_\ep^n f} \leq  \beta^{n}\var{f}+\cly\abs{f}_{L^ 1},
 \end{equation}
with $\beta=2\lam^{-1} $ and $\cly=2C_0/(1-2\lam^{-1}) $.
Similar estimates can be made for complex-valued $f $ by applying \eqref{E:unifLYinequality2} to the real and imaginary parts separately.  Since each $T_\ep $ has a unique \acim, we know from our discussion in \S\ref{S:reviewLY} that for sufficiently small $\ep>0$, $\frac{1}{n}\sum_{k=0}^{n-1}\L_\ep ^k 1 \overset{L^1}{\longrightarrow}\phi_\ep $ as $n\rightarrow\infty$.  It follows from Lemma~\ref{L:expdecay} that $\var{\phi_\ep}$ and $\lip{\phi_\ep^{reg}}  $ are uniformly bounded.

Next, we prove (iii).  Given $\eta>0$, choose $n$ large enough that $\lam^{-n}\cly<\eta$.  Using (I2), we can choose $\del>0$ so small that for $0<k\leq n $, $(T_0^k\C_0)\cap [h_*-2\del,h_*+2\del]=\emptyset$.  It follows that for $\ep$ sufficiently small, $(T_\ep^k\C_\ep)\cap [h_*-\del,h_*+\del]=\emptyset$ as well.  Using part (b) of Lemma~\ref{L:expdecay} with $m=n$, we then see that $\varsub{[h_*-\delta,h_*+\delta]}{\phi_\ep^{sal}} <\eta.$

Finally, to prove Lemma~\ref{L_psibound}, we use Equation~\eqref{E:unifLYinequality2} with $f=\psi_\ep $, $n $ chosen so large that $\beta^ {n}<1/2$, and $\ep$ chosen so small that $\rho_\ep^n >3/4 $.    It follows that $\var{\psi_\ep}\leq\cly/(\rho_\ep^n - \beta^ {n})\leq 4 \cly$.

\subsubsection*{Modifications when the minimum expansion is not bigger than two}
If, in assumption (I4), the minimum expansion of $T_0$ is $\lam_0\leq 2 $, one derives Lasota-Yorke estimates for $\L_0 $ by first fixing $N $ large enough so that $\lam_0 ^N >2 $.  Then the arguments from~\cite{LasotaYorke} used above will yield a Lasota-Yorke estimate for $\L_0^N $, and this can be interpolated to give similar estimates for $\L_0 $.  One can try to obtain uniform estimates for $\L_\ep$, but the arguments used above will only work if the critical points for $T_\ep^N $ are in a one-to-one correspondence with and very close to those of $T_0^N$,  compare Equation~\eqref{E:LYconst}, as would be the case if $\C_0\cap(\cup_{k=1}^{N-1}T^k\C_0) =\emptyset$.

\comment{
\begin{figure}[htbp]
\begin{center}
\resizebox{12cm}{!}{\input 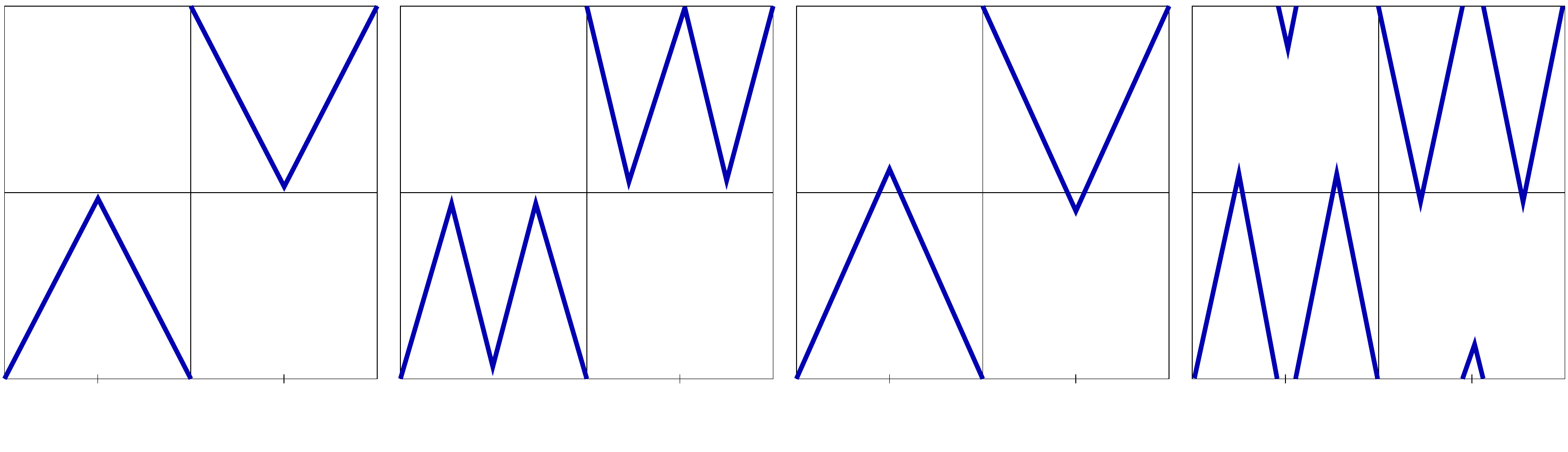_t}
\caption{Creation of small intervals of differentiability.}
\label{figure:smallintervals}
\end{center}
\end{figure}
}

Unfortunately, this will never be the case in our setting, at least when $\bp\in\C_0 $.  This is because the infinitesimal holes in $H_0 $ are necessarily critical points, and they are mapped to $\bp $ by $T_0 $.  Because at least some of the infinitesimal holes must be mapped across the boundary point when $\ep>0$, this means that necessarily $T_\ep^2$ will have more critical points than $T_0^2 $, and these additional critical points will create very short intervals on which $T_\ep^2$ is smooth; see Figure~\ref{figure:smallintervals}. However, this problem can be dealt with using assumption (I4b).  Specifically, in~\cite{BaladiYoung} it is shown that because of the restriction on the periodic critical points, the growth in the number of the very short intervals on which $T_\ep^n$ is smooth as $n $ increases can be controlled, and that uniform Lasota-Yorke estimates can still be made.  Precisely, there exists $\ep_0>0 $ and constants $\beta\in(0,1) $, $\cly$ such that for each $\ep\in [0,\ep_0] $, $n\geq 1$ and $f\in BV $,
 \begin{equation*}\label{E:unifLYinequality}
    \var{\L_\ep^n f} \leq  \cly\beta^{n}\var{f}+\cly\abs{f}_{L^ 1}.
 \end{equation*}
The proof of this is essentially identical to the proof of Lemma~3.2 in \cite{BaladiBook} (see also her Remark~3.4, and compare the proof of Lemma 8 in~\cite{BaladiYoung}), and so we omit it.
The rest of the proofs of Proposition~\ref{P:densities} and Lemma~\ref{L_psibound} proceed as above.

\subsubsection*{Acknowledgments}
The authors would like to especially acknowledge D.~Dolgopyat for suggesting this project and for helpful conversations.  They also thank M.~Demers for helpful conversations.

\bibliography{imms}
\bibliographystyle{alpha}
\end{document}